\newcommand{\E}{{\mathbb{E}}}
\newcommand{\Prob}{{\mathbb{P}}}
\newcommand{\1}{{\mathbf{1}}}
\let\mathcal\mathscr
\let\geq\geqslant
\let\leq\leqslant
\renewcommand\le{\leq}
\renewcommand\ge{\geq}
\def\ee{\mathrm{e}}
\def\dd{\mathrm{d}}
\theoremstyle{plain}
\newtheorem{theo}{Theorem}[section]
\newtheorem{defi}[theo]{Definition}
\newtheorem{prop}[theo]{Proposition}
\newtheorem{lemm}[theo]{Lemma}
\newtheorem{nota}[theo]{Notation}
\newtheorem{rema}{Remark}
\def\tte{t_{\mathrm{e}}}
\def\tti{t_{\mathrm{i}}}
\def\gte{T_{\mathrm{e}}}
\def\gti{T_{\mathrm{i}}}
\def\sse{S_{\mathrm{e}}}
\def\ssi{S_{\mathrm{i}}}
\def\gtt{\mathfrak{T}}
\def\caln{\mathcal{N}}
\def\nn{\mathfrak{n}}
\def\NN{\mathfrak{N}}
\begin{document}

\title{Priors for the Bayesian star paradox}

\author{Mikael Falconnet}

\address{Université Joseph Fourier Grenoble 1\\
Institut Fourier UMR 5582 UJF-CNRS\\
100 rue des Maths, BP 74\\
38402 Saint Martin d'Hères\\
France }

\date{\today}

\begin{abstract}
  We show that the Bayesian star paradox, first proved mathematically
  by Steel and Matsen for a specific class of prior distributions,
  occurs in a wider context including less regular, possibly
  discontinuous, prior distributions. 
\end{abstract}

\subjclass[2000]{Primary: 60J28; 92D15. Secondary: 62C10.}

\keywords{Phylogenetic trees, Bayesian statistics, star trees}

\maketitle

\section*{Introduction}

In phylogenetics, a
particular resolved tree can be highly supported even when the data is
generated by an unresolved star tree. This unfortunate aspect of the
Bayesian approach to phylogeny 
reconstruction is called the \textit{star paradox\/}. 
Recent studies highlight that the paradox can occur in the simplest
nontrivial setting, namely
for an unresolved rooted tree on three taxa and two states, see Yang
and Rannala~\cite{yang:branch} and Lewis et al.~\cite{lewis:poly}.  
Kolaczkowski and Thornton~\cite{kola:para} presented some
simulations and suggested that artifactual high posteriors for a
particular resolved tree might disappear for very long
sequences. Previous simulations in~\cite{yang:branch} were plagued by
numerical problems, which left unknown the nature of the limiting
distribution on posterior probabilities. For an introduction to the
Bayesian approach to phylogeny reconstruction we refer to chapter~5 of
Yang~\cite{yang:CME}.

The statistical question which supports the star paradox is whether
the Bayesian posterior distribution of the resolutions of a star tree
becomes uniform when the length of the sequence tends to infinity,
that is, in the case of three taxa and two states, whether the posterior distribution
of each resolution converges to $1/3$.  In a recent paper, Steel and
Matsen \cite{steel:baypar} disprove this, thus ruining Kolaczkowski
and Thornton's hope, for a specific class of branch length priors
which they call \textit{tame\/}. More precisely, Steel and Matsen show
that, for 
every tame prior and every fixed $\varepsilon>0$, the posterior
probability of any of the three possible trees stays above
$1-\varepsilon$ with non vanishing probability when the length of the
sequence goes to infinity. This result was recognized by
Yang~\cite{yang:para} and reinforced by theoretical results on the 
posterior probabilities by Susko~\cite{susko:para}.  

Our main result is that Steel and Matsen's conclusion holds for a
wider class of priors, possibly highly irregular, which we call
\textit{tempered\/}.  Recall that Steel and Matsen consider smooth
priors whose densities satisfy some regularity conditions.

The paper is organized as follows.
In Section~\ref{sect:bayfra}, we describe the Bayesian framework of
the star paradox. In Section~\ref{sect:main}, we define the class of
tempered priors on the branch lengths and we state our main result. In
Section~\ref{sect:sm.etendu}, we state an extension of a
technical lemma due to Steel and Matsen, which allows us to extend
their result. In Section~\ref{sect:theoproof}, we prove our main
result. Section~\ref{sect:claim} is devoted to the
proofs of intermediate results. In
Appendix~\ref{appe:tame}, we prove that every tame prior, in 
Steel and Matsen's sense, is tempered, in the sense of this paper, and
we provide examples of tempered, but not tame, prior distributions.
 Finally, in Appendix~\ref{appe:main}, we prove the extension of Steel
 and Matsen's technical lemma stated in
Section~\ref{sect:sm.etendu}.

\section{Bayesian framework for rooted trees on three taxa} 
\label{sect:bayfra}

We consider three taxa, encoded by the set $\tau=\{1,2,3\}$, with two
possible states.  Phylogenies on $\tau$ are supported by one of the four
following trees: the star tree $R_0$ on three taxa and, for
every taxon $i$ in $\tau$, the tree $R_i$ such that $i$
is the outlier. Relying on a commonly used notation, this reads as
$$
R_1=(1,(2,3)),\quad
R_2=(2,(1,3)),\quad
R_3=(3,(1,2)).
$$
The phylogeny based on $R_0$ is specified by the common length of its
three branches, denoted by $t$. 
For each $i$ in $\tau$, the phylogeny based on $R_i$ is specified by
a pair of branch lengths $(\tte,\tti)$, where $\tte$ denotes the
external branch length and $\tti$ the internal branch length, 
see figure~\ref{figu:trees}. 

For instance, in the phylogeny based on $R_1$, the divergence of taxa
$2$ and $3$ 
occurred $\tte$ units of time ago and the divergence of taxon $1$ and a common ancestor of taxa $2$
and $3$ occurred $\tti+\tte$ units of time ago.

\begin{figure}[ht]\label{figu:trees}
\begin{center}
\includegraphics[width=\textwidth]{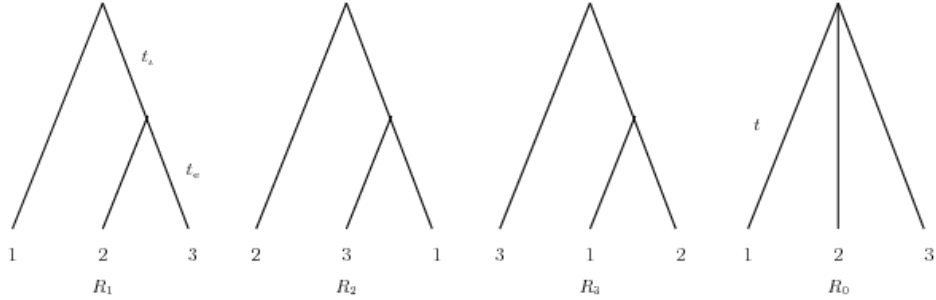}
\end{center}
\caption{The four rooted trees for three species.}
\end{figure}

We assume that the sequences evolve according to a two-state continuous-time
Markov process with equal substitution rates (which we may take to
equal $1$) between the two
character states.  

Four site patterns can occur. The first one, denoted by $s_0$, is such that a given
site coincides in the three taxa. The three others, denoted by $s_i$ with $i$ in $\tau$,
are such that a given site coincide in two taxa and is different
in the third taxon, which is taxon $i$. In other words, if one writes the site patterns in
taxa $1$, $2$ and $3$ in this order and $x$ and $y$ for any two different
characters,
\[
  s_0 = xxx, \quad s_1= yxx, \quad s_2= xyx, \quad \mbox{and} \quad s_3= xxy.
\]
Let $\{s_0,s_1,s_2,s_3\}$ denote the set of site patterns in the
specific case described above of three taxa and two states evolving in a two-state
symmetric model.
Assume that the counting of site pattern $s_i$ is $n_i$. Then
$n=n_0+n_1+n_2+n_3$ is the total length of the sequences and, in the
independent two-state symmetric model considered in this paper,  the
quadruple $(n_0,n_1,n_2,n_3)$ is a sufficient statistics of the
sequence data. We use the letter $\nn$ to denote any quadruple
$(n_0,n_1,n_2,n_3)$ of nonnegative integers such that
$|\nn|=n_0+n_1+n_2+n_3=n\ge1$. 

For every site pattern $s_i$ and every branch lengths $(\tte,\tti)$, 
let $p_i(\tte,\tti)$ denote the
probability that $s_i$ occurs on tree $R_1$ with branch
lengths $(\tte,\tti)$.  Standard computations provided by Yang and Rannala~\cite{yang:branch}
show that
\begin{align*}
4p_0(\tte,\tti) &= 1 + \ee^{-4\tte} + 2 \ee^{-4(\tti+\tte)},\\
4p_1(\tte,\tti) &= 1 + \ee^{-4\tte} - 2 \ee^{-4(\tti+\tte)},\\
4p_2(\tte,\tti) &= 4p_3(\tte,\tti) = 1- \ee^{-4\tte}.
\end{align*}
Let $\gtt=(\gte,\gti)$ denote a pair of positive random
variables representing the branch lengths $(\tte,\tti)$, and
$\NN=(N_0,N_1,N_2,N_3)$ denote a quadruple of integer random variables representing the counts
of sites patterns $\nn=(n_0,n_1,n_2,n_3)$.


\section{The star tree paradox} \label{sect:main}

Assuming that every taxon evolved from a common ancestor, the
aim of phylogeny reconstruction is to compute the most likely tree
$R_i$. To do so, in the Bayesian approach, one places prior
distributions on the trees $R_i$ and on their branch lengths
$\gtt=(\gte,\gti)$. 

\subsection{Main result}
Let $\Prob(\NN=\nn|R_i,\gtt)$ denote the probability that
$\NN=\nn$ assuming that the data is generated along the
tree $R_i$ conditionally on the branch lengths $\gtt=(\gte,\gti)$. One
may consider $R_1$ only since, for 
every $\nn=(n_0,n_1,n_2,n_3)$, the symmetries of the setting yield the relations
$$
\Prob(\NN=\nn|R_2,\gtt) 
= 
\Prob(\NN=(n_0,n_2,n_3,n_1)|R_1,\gtt),
$$
and
$$
\Prob(\NN=\nn|R_3,\gtt) 
= 
\Prob(\NN=(n_0,n_3,n_1,n_2)|R_1,\gtt).
$$
\begin{nota}
For every site pattern $s_i$, let $P_i$ denote the random variable 
$$
P_i=p_i(\gtt)=p_i(\gte,\gti).
$$
For every $i$ in $\tau$ and every $\nn$, let  $\Pi_i(\nn)$
denote the random variable  
$$
\Pi_i(\nn)=P_0^{n_0}P_1^{n_i}P_2^{n_j+n_k},
\quad\mbox{with}\quad\{i,j,k\}=\tau.
$$
\end{nota}
We recall that $P_2=P_3$ and we note that, if 
$|\nn|=n_0+n_1+n_2+n_3=n$ with $n\ge1$, then, for every $i$ in $\tau$,
$$
\Pi_i(\nn)=P_0^{n_0}P_1^{n_i}P_2^{n-n_0-n_i}.
$$
Fix $\nn$ and assume that $|\nn|=n_0+n_1+n_2+n_3=n$ with $n\ge1$. 
For every $i$ in $\tau$, the posterior probability of $R_i$ conditionally
on $\NN=\nn$ is 
$$
  \Prob(R_i|\NN=\nn) 
= \frac{n!}{n_0!n_1!n_2!n_3!}  \,\frac1{\Prob(\NN=\nn)}\,\E(\Pi_i(\nn)).
$$
Thus, for every $i$ and $j$ in $\tau$,
$$
  \frac{\Prob(R_i|\NN=\nn)}{\Prob(R_j|\NN=\nn)} 
= 
\frac{\E(\Pi_i(\nn))}{\E(\Pi_j(\nn))}.
$$

For every $\varepsilon>0$ and every $i$ in $\tau$, let
$\caln_i^\varepsilon$ denote the set of $\nn$ such that, for both indices 
$j$ in $\tau$ such that $j\ne i$,
$$
\E(\Pi_i(\nn))\geq (2/\varepsilon)\,\E(\Pi_j(\nn)).
$$
One sees that, for every $i$ in $\tau$ and $ \nn$ in $\caln_i^\varepsilon$,
$$
  \Prob(R_i|\NN=\nn) \geq 1 - \varepsilon,
$$
which means that the posterior probability of tree $R_i$ among the
three possible trees is highly supported.

Recall that, under hypothesis $R_0$ and for a tame prior distribution
on $\gtt=(\gte,\gti)$, Steel and Matsen prove that, 
for every $i$ in $\tau$, $\Prob(\NN\in\caln_i^\varepsilon)$ does not go
to $0$ when the sequence length $n$ 
goes to infinity, and consequently that the posterior probability
$\Prob(R_i|\NN)$ can be close to $1$ even when the sequence length $n$ is
large.

As stated in the introduction, our aim is to prove the same result
for tempered prior distributions of $\gtt=(\gte,\gti)$, which we now define.


\begin{nota}
(1) For every $s\in[0,1]$ and $z\in[0,3]$, let
$$
 G(z,s) =  \Prob \left( \ee^{-4 \gte}(1-\ee^{-4\gti}) \le s \,|\,
   \ee^{-4 \gte}(1+2\ee^{-4\gti}) = z \right). 
$$
(2) For every positive $t$ and every site pattern $s_i$, let  $q_i$ denote
the probability that $s_i$ occurs on tree $R_0$, hence 
$$
4q_0=4p_0(0,t)=1+3\ee^{-4t},\quad
4q_1=4q_2=4q_3=1-\ee^{-4t}.
$$
(3) Let $\ell_t$ denote a positive real number such that $1< 4q_0 -\ell_t$
and $4q_0+\ell_t< 4$, for instance $\ell_t = 3\ee^{-4 t}\,(1 -\ee^{-4
  t}) $. Let $I$ and $I_t$ denote the intervals 
$$
I = [ 0, 3 ], \quad
I_t = [4q_0-1 - \ell_t , 4q_0-1 +
\ell_t] \subset]0,3[.
$$
(4)
For every positive $t$ and integer $n$, let
$$
Q_n(t)=\Prob \left( \gti\le1/n,\,t\le\gte\le
    t+1/n \right).
$$
\end{nota}

\begin{defi}[Tempered priors] \label{defi:temp} The distribution of
  $\gtt=(\gte,\gti)$ is tempered if the following two conditions hold.
\begin{enumerate}
\item\label{deftemp1}
For every $t$, there exists a real number $s_0$ in $]0,1]$, an interval $I_t$ around
$4q_0-1$, some bounded functions 
$F_i$, some positive numbers $\alpha$ and $\kappa$, an integer $k\ge1$ and some real numbers
$\varepsilon_i$ such that
\[
0=\varepsilon_0 < \varepsilon_1 < \dots < \varepsilon_{k-1} \le 2 <
\varepsilon_{k},
\]
and such that for every $s$ in $[0,s_0] $ and every $ z$ in $I_t$,
$$
\left| G(z,s) - \sum_{i=0}^{k-1} F_i(z)s^{\alpha+\varepsilon_i}
\right| \le \kappa s^{\alpha + \varepsilon_k}.
$$
\item\label{deftemp2} 
For every positive $t$, $n^{-1}\log Q_n(t)\to0$ when $n\to\infty$.
\end{enumerate}
\end{defi}
We detail the properties involved in Definition~\ref{defi:temp} and
provide examples of tempered priors in subsection~\ref{subsect:temp} below.


We now state our main result, which is an extension of Steel and
Matsen's result to our more general setting.

\begin{theo} \label{theo:baypar}
  Consider sequences of length $n$ generated by a star tree $R_0$ on
  $3$ taxa with strictly positive edge length $t$. Let $\NN$ be the
  resulting data, summarized by site pattern counts. Consider any
  prior on the three resolved trees $(R_1, R_2, R_3)$ which assigns
  strictly positive probability to each tree, and a tempered
  prior distribution on their branch lengths $\gtt=(\gte,\gti)$.
\\
Then, for every $i$ in $\tau$ and every positive
  $\varepsilon$, there exists a positive $\delta$ such that,
  when $n$ is large enough,
\[
  \Prob \left( \Prob(R_i|\NN) \ge1-\varepsilon \right) \ge \delta.
\]
\end{theo}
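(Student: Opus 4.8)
By the symmetry relations recalled just before the first Notation box, it suffices to treat $i=1$; and since a non‑uniform but strictly positive prior on $\{R_1,R_2,R_3\}$ only modifies the threshold constant $2/\varepsilon$ in the definition of $\caln_1^\varepsilon$ by a fixed factor, we may assume this prior uniform. As $\{\NN\in\caln_1^\varepsilon\}\subseteq\{\Prob(R_1\,|\,\NN)\ge1-\varepsilon\}$, it is then enough to exhibit $\delta>0$ with $\Prob(\NN\in\caln_1^\varepsilon)\ge\delta$ for all large $n$. Introduce the centered statistics $D_i=2N_i-N_j-N_k$ for $\{i,j,k\}=\tau$; then $D_1+D_2+D_3=0$, and under $R_0$ the central limit theorem for the multinomial vector $\NN$ gives that $n^{-1/2}(D_1,D_2,D_3)$ converges in law to a centered Gaussian vector, nondegenerate on the plane $\{d_1+d_2+d_3=0\}$, while $n^{-1/2}(N_0-q_0n)$ is asymptotically Gaussian as well.

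The core step is an asymptotic evaluation of $\E(\Pi_i(\nn))$. Passing from $(\gte,\gti)$ to $Z=\ee^{-4\gte}(1+2\ee^{-4\gti})$ and $S=\ee^{-4\gte}(1-\ee^{-4\gti})$ — so that $4P_0=1+Z$, $4P_1=(3-Z+4S)/3$ and $4P_2=(3-Z-2S)/3$ — and writing $\phi_i(z,s)$ for $n^{-1}\log(4^n\Pi_i(\nn))$ in these variables, one gets
\[
  \phi_i(z,s)=\frac{n_0}{n}\log(1+z)+\frac{n_i}{n}\log\frac{3-z+4s}{3}+\frac{n-n_0-n_i}{n}\log\frac{3-z-2s}{3}.
\]
Two features drive the argument. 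First, $\phi_i(z,0)$ does not depend on $i$ — call it $\psi_\nn(z)$ — and $\psi_\nn$ is maximized at $z_\nn^\ast=4n_0/n-1$, which lies in the interior of $I_t$ once $n_0/n$ is close enough to $q_0$, in particular (under the bound $|n_0-q_0n|\le C\sqrt n$ imposed below) for all large $n$. Second, near $s=0$,
\[
  \phi_i(z,s)=\psi_\nn(z)+\frac{2D_i}{n}\cdot\frac{s}{3-z}-\beta_\nn(z)\,s^2+O(s^3),
\]
with $\beta_\nn(z)>0$ whose limiting value as $(n_0/n,n_i/n)\to(q_0,q_1)$ does not depend on $i$. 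I would then invoke the extension of Steel and Matsen's technical lemma stated in Section~\ref{sect:sm.etendu}: by the first condition of Definition~\ref{defi:temp}, after the rescaling $s=u/\sqrt n$ it replaces $\int h(s)\,\dd G(z,s)$ by its leading small‑$s$ contribution, which is of order $n^{-\alpha/2}$ and proportional to $F_0(z)$; combined with the intermediate estimates of Section~\ref{sect:claim}, which use the second condition of Definition~\ref{defi:temp} to bound the contribution of the $(z,s)$ with $z\notin I_t$ or $s$ not small by a subdominant term, this yields
\[
  \E(\Pi_i(\nn))=A_\nn\bigl(\Phi(D_i/\sqrt n)+o(1)\bigr)
\]
uniformly over all $\nn$ with $|n_0-q_0n|\le C\sqrt n$ and $\max_i|D_i|\le C\sqrt n$, where $A_\nn>0$ is independent of $i$ and
\[
  \Phi(d)=\int_0^\infty u^{\alpha-1}\exp(b_1 d u-b_2 u^2)\,\dd u,\qquad b_1,b_2>0.
\]

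The factor $A_\nn$ cancels in the ratios $\E(\Pi_1(\nn))/\E(\Pi_j(\nn))$, $j=2,3$. The function $\Phi$ is continuous, strictly positive and strictly increasing, with $\Phi(d)\le\Phi(0)$ for $d\le0$ and, completing the square in the exponent, $\Phi(d)/\Phi(0)\to\infty$ as $d\to+\infty$. Given $\varepsilon>0$, fix $M>0$ so large that $\Phi(M)\ge(4/\varepsilon)\Phi(0)$, set $C=3M$, and put
\[
  E_n=\bigl\{\,|N_0-q_0n|\le C\sqrt n,\ M\sqrt n\le D_1\le C\sqrt n,\ D_2\le0,\ D_3\le0\,\bigr\}.
\]
On $E_n$ one has $D_1/\sqrt n\in[M,C]$ and $D_2/\sqrt n,D_3/\sqrt n\in[-C,0]$, hence $\Phi(D_1/\sqrt n)\ge\Phi(M)\ge(4/\varepsilon)\Phi(0)\ge(4/\varepsilon)\Phi(D_j/\sqrt n)$, so by the displayed asymptotic $\E(\Pi_1(\nn))\ge(2/\varepsilon)\E(\Pi_j(\nn))$ for $j=2,3$ once $n$ is large, i.e.\ $\NN\in\caln_1^\varepsilon$. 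Finally, by the central limit theorem of the first paragraph $\Prob(E_n)$ converges to the mass, under the limiting Gaussian, of the set $\{\,M\le d_1\le C,\ d_2\le0,\ d_3\le0\,\}$; the latter is a subset of the support $\{d_1+d_2+d_3=0\}$ with affine boundary and nonempty relative interior — it contains $(2M,-M,-M)$ as an interior point — so its Gaussian mass is a strictly positive number $\delta$. Hence $\Prob(\NN\in\caln_1^\varepsilon)\ge\Prob(E_n)\ge\delta/2$ for all large $n$, as required.

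The only genuinely delicate ingredient is the uniform expansion $\E(\Pi_i(\nn))=A_\nn(\Phi(D_i/\sqrt n)+o(1))$. Because the prior density near $S=0$ may be irregular or even discontinuous, no classical Laplace expansion applies, and the steepest‑descent estimate must be run through the tempered hypotheses: a lower bound for $\E(\Pi_1(\nn))$ obtained by restricting the integral to $z\in I_t$ and $s$ small and invoking the first tempered condition, together with an upper bound for $\E(\Pi_j(\nn))$ obtained by discarding the complementary region via the second tempered condition, all with error terms uniform over the $\sqrt n$‑scale window of data $\nn$. Once this is in place, the probabilistic conclusion follows at once from the central limit theorem and the elementary monotonicity of $\Phi$.
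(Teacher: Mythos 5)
Your probabilistic skeleton (CLT for $\Prob(\NN\in E_n)$, then a pointwise comparison of $\E(\Pi_1(\nn))$ versus $\E(\Pi_j(\nn))$ over a $\sqrt n$-window) is the same as the paper's, up to cosmetics: the set you call $E_n$ plays the role of $F_c^{(n)}$, your $D_i$ are $3\sqrt n\,\Delta_i(\nn)$, and your reduction to $i=1$ and to a uniform tree prior are both implicit in the paper. Where you genuinely diverge is in the comparison step, and there I see two problems.

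First, you invoke Proposition~\ref{prop:main} as a device that "replaces $\int h(s)\,\dd G(z,s)$ by its leading small-$s$ contribution" after the scaling $s=u/\sqrt n$. That is not what Proposition~\ref{prop:main} says: it is a one-sided moment-ratio inequality, $2tR_t\ge\alpha$ for $t\ge\theta(\gamma)$, not a two-sided Laplace asymptotic. The paper's actual route to the comparison is quite different: it conditions on $P_0$, passes to $U=(P_1-P_2)/(1-P_0)$ and $V=\zeta(U)$, uses the elementary bound $\bigl((1+2u)/(1-u)\bigr)^m\ge m^2(1-\zeta(u))$ to convert the exponential $D_1$-weighting into a $4c^2n(1-V)$ factor (Lemma~\ref{lemm:A/B}), and only then applies Proposition~\ref{prop:main} to the conditional law of $V$ given $P_0$ (Proposition~\ref{prop:Z}). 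This yields a pointwise-in-$z$ inequality $\E(\Pi_1\,|\,4P_0-1=z)\ge c^2\alpha\,\E(\Pi_j\,|\,4P_0-1=z)$, which integrates against the (completely uncontrolled) marginal of $4P_0-1$ without any loss. Your route would need a genuine uniform Laplace lemma for $\int\exp(a_ns-b_ns^2)\,\dd G(z,s)$ under the tempered expansion; that lemma is plausible but is not in the paper and you have not proved it.

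Second, and related, your asymptotic $\E(\Pi_i(\nn))=A_\nn\bigl(\Phi(D_i/\sqrt n)+o(1)\bigr)$ with $z$-independent $b_1,b_2$ is not literally correct: in your own display $b_1=2/(3-z)$ and $b_2=\beta_\nn(z)$ depend on $z$, and the tempered hypothesis says nothing about the marginal distribution of $Z=4P_0-1$, so you cannot argue that the $z$-integral concentrates near $4q_0-1$ and freeze $b_1,b_2$ there. The inequality you actually need — $\E(\Pi_1(\nn))\ge(2/\varepsilon)\E(\Pi_j(\nn))$ on $E_n$ — can still be rescued in conditional form, because the comparison $\Phi_z(D_1/\sqrt n)\ge(4/\varepsilon)\Phi_z(D_j/\sqrt n)$ holds uniformly over $z\in I_t$ once $b_1(z),b_2(z)$ are bounded there; but as written your marginal asymptotic overclaims. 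The paper avoids this issue entirely by never leaving the conditional world. In short: right architecture, genuinely different (and more ambitious) analytic core, but the key estimate is asserted rather than proved and misattributes Proposition~\ref{prop:main}.
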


We prove Theorem~\ref{theo:baypar} in Section~\ref{sect:theoproof}. 

\subsection{Motivation and intuitive understanding of
  Definition~\ref{defi:temp}} \label{subsect:temp}

In Definition~\ref{defi:temp}, condition~\ref{deftemp2}
is easy to describe,
to illustrate and to check, while the content of condition~\ref{deftemp1} might be more difficult to
grasp. Condition~\ref{deftemp1} involves a Taylor expansion
around $s=0$ of the conditional cumulative distribution function $s \mapsto G(z,s)$, where the Taylor coefficients 
depend on $z$. Such a Taylor expansion roughly describes the prior distribution when $\tti\to0$ and when $\tte$ is roughly constant. The precise definition of $G(z,\cdot)$ and the technical result
stated in Proposition~\ref{prop:main} are both dictated by our approach to the proof of Theorem~\ref{theo:baypar}. A key hypothesis is that $\varepsilon_0=0$ while $\varepsilon_k>2$, which means that we are given a limited expansion of $s\mapsto G(z,s)$ up to a better order than $s^2$ when $s\to0$.

At this point, the reader can wonder how to check if a given prior is tempered or
not and if the verification is simply possible in concrete cases, given the convoluted 
aspect of this definition. Hence we now present some explicit examples of
tempered priors. We begin with the following result.

\begin{prop} \label{prop:tame} 
Assume that $\gtt=(\gte,\gti)$ has a smooth
  joint probability density, bounded and everywhere
  non zero. Then the distribution of $\gtt=(\gte,\gti)$ is
  tempered.
\end{prop}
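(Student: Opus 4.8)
The plan is to verify the two conditions of Definition~\ref{defi:temp} directly from smoothness and positivity of the joint density $f$ of $\gtt=(\gte,\gti)$. Condition~\eqref{deftemp2} is the easy one: since $f$ is bounded, $Q_n(t)=\Prob(\gti\le 1/n,\ t\le\gte\le t+1/n)$ is at most $\|f\|_\infty\cdot(1/n)^2$, so $Q_n(t)\le C/n^2$ and $n^{-1}\log Q_n(t)\le n^{-1}(\log C-2\log n)\to 0$. (One should also check $Q_n(t)>0$, which follows since $f$ is everywhere nonzero, so the logarithm makes sense; the upper bound is what matters for the limit.)

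The substance is condition~\eqref{deftemp1}, the expansion of $G(z,s)$. First I would change variables from $(\gte,\gti)$ to the pair $(Z,S)$ where $Z=\ee^{-4\gte}(1+2\ee^{-4\gti})$ and $S=\ee^{-4\gte}(1-\ee^{-4\gti})$, so that $G(z,s)=\Prob(S\le s\mid Z=z)$. On the region where $\gti$ is small, $Z$ is close to $3\ee^{-4\gte}$ and $S$ is close to $0$; the map $(\gte,\gti)\mapsto(Z,S)$ is a smooth diffeomorphism with nonvanishing Jacobian for $\gti>0$ (one computes $\partial(Z,S)/\partial(\gte,\gti)$ and checks it is invertible near $\gti=0$), so the pushforward of $f$ is a density $g(z,s)$ that is smooth and positive for $s$ in a right-neighborhood of $0$ and $z$ in a neighborhood of $3\ee^{-4t}$, which we take to be $I_t$. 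Then $G(z,s)=\big(\int_0^s g(z,u)\,\dd u\big)\big/\big(\int_0^{s_{\max}(z)} g(z,u)\,\dd u\big)$, i.e.\ a ratio of the partial and total conditional densities. Expanding $g(z,u)$ in $u$ by Taylor's theorem, $g(z,u)=g(z,0)+g_u(z,0)u+\cdots$, and integrating term by term gives $\int_0^s g(z,u)\,\dd u = g(z,0)s+\tfrac12 g_u(z,0)s^2+\cdots$ with a uniformly controlled remainder (uniformity in $z\in I_t$ coming from compactness of $\overline{I_t}$ and smoothness of $g$). Dividing by the normalizing constant, which is a smooth positive function of $z$, yields an expansion of the form $G(z,s)=F_0(z)s+F_1(z)s^2+\cdots$ with bounded coefficients $F_i$ and remainder bounded by $\kappa s^{3}$. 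This matches Definition~\ref{defi:temp}\eqref{deftemp1} with $\alpha=1$, $k=3$, exponents $\varepsilon_0=0,\varepsilon_1=1,\varepsilon_2=2,\varepsilon_3=3$ (so $\varepsilon_{k-1}=\varepsilon_2=2\le 2<3=\varepsilon_k$), and $s_0$ small enough that the Taylor remainder estimate is valid on $[0,s_0]\times I_t$.

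The main obstacle I anticipate is the behaviour of the change of variables at the boundary $\gti=0$, i.e.\ $s=0$: the Jacobian of $(\gte,\gti)\mapsto(Z,S)$ degenerates as $\gti\to0$ (since $S\to0$ forces the two coordinate functions to become dependent to leading order), so I need to verify carefully that $g(z,s)$ nonetheless extends to a smooth, strictly positive function up to $s=0$ — equivalently, that $\gti$ is a smooth function of $(Z,S)$ with $\partial\gti/\partial S$ finite and nonzero at $S=0$. Concretely, for fixed $\gte$ the map $\gti\mapsto S=\ee^{-4\gte}(1-\ee^{-4\gti})$ has derivative $4\ee^{-4\gte}\ee^{-4\gti}$, which is bounded away from $0$ and $\infty$ near $\gti=0$, so inverting this and then solving for $\gte$ in terms of $(Z,S)$ is legitimate; this is the step that deserves the most care but is ultimately a routine implicit-function-theorem argument. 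Once $g$ is known to be smooth and positive near $s=0$ uniformly in $z\in I_t$, the rest is the elementary Taylor-with-remainder computation sketched above.
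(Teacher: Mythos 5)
Your proof is correct and essentially the same as the paper's: for condition (2) you bound $Q_n(t)\le\|f\|_\infty/n^2$, and for condition (1) you pass to the coordinates $(Z,S)=(\sse\ssi,\sse(3-\ssi))$ and Taylor-expand the conditional density at $s=0$, which is exactly what the paper does after first passing to $(\sse,\ssi)=(\ee^{-4\gte},1+2\ee^{-4\gti})$, substituting $z=xy$, and writing $G(z,s)=H(z,s)/H(z,1)$ with $H(z,0)=0$, arriving at the same parameters $\alpha=1$, $\varepsilon_i=i$, $k=3$. The one point you flag as the ``main obstacle'' is not actually one: the Jacobian of $(\gte,\gti)\mapsto(Z,S)$ equals $-48\,\ee^{-8\gte}\ee^{-4\gti}$, which is nonzero at $\gti=0$ (it degenerates only as $\gte$ or $\gti\to\infty$), so the implicit-function step is indeed routine as you ultimately observe; also note that with $\alpha=1$ and $\varepsilon_k=\varepsilon_3=3$ the remainder bound in your expansion should read $\kappa s^{4}$ rather than $\kappa s^{3}$.
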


As a consequence, every tame prior fulfills the hypothesis of
Proposition~\ref{prop:tame}, hence every tame prior is tempered, as
claimed in the introduction. This case includes the exponential 
priors discussed in~\cite{yang:branch}. We 
prove Proposition~\ref{prop:tame} in Appendix~\ref{appe:tame}. 

However some
tempered priors are not  
tame, as illustrated by the following example where 
Steel and Matsen's condition fails.

\begin{defi}\label{defab}
Let $a>0$ and $b>0$. Let $(t_n)$, $(y_n)$ and $(r_n)$ denote sequences
of positive numbers, indexed by $n\ge1$ and
defined by the formulas
$$
  t_n = n^{-a}, \quad 
y_n = 1+2\ee^{-4t_n} \quad 
r_n = y_n\, \left( n^{-b} - (n+1)^{-b}\right).
$$
Finally, let
$$
r= \sum_{n \geq 1} r_n.
$$
\end{defi}
\begin{prop} \label{prop:disc}
In the setting of Definition~\ref{defab}, assume the following:
\begin{itemize}
 \item[(i)]
$3a<\min\{1,b\}$.
\item[(ii)] The random variable $\gti$ is discrete and such that, for every $n\ge1$,
\[
  \Prob( \gti = t_n ) =  r_n/r. 
\]
\item[(iii)] The random variable $\gte$ is continuous, independent of
$\gti$, with exponential law of parameter $4$, that is, with density
$4\,\ee^{-4t}$ on $t\ge0$ with respect to the Lebesgue measure.
\end{itemize}
Then, the distribution of  $\gtt=(\gte,\gti)$ is not tame but it is
tempered, for the parameters 
\[
  k=3, \quad \alpha = b/a, \quad \varepsilon_1 = 1, \quad
  \varepsilon_2 = 2, \quad \varepsilon_3 = 3.
\]
\end{prop}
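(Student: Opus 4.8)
The plan is to reduce $G(z,s)$ to a completely explicit expression, exploiting that $\gte$ is exponential while $\gti$ is atomic, and then to read the required expansion off that formula. Put $U=\ee^{-4\gte}$ and $V=\ee^{-4\gti}$; then $U$ is uniform on $(0,1)$, $V$ takes the value $v_n:=\ee^{-4t_n}$ with probability $r_n/r$, and $U,V$ are independent. On $\{V=v_n\}$ one has $\ee^{-4\gte}(1+2\ee^{-4\gti})=U\,y_n$ and $\ee^{-4\gte}(1-\ee^{-4\gti})=U(1-v_n)$, with $y_n=1+2v_n$ as in Definition~\ref{defab}. Conditionally on $\{V=v_n\}$ the first of these is uniform on $(0,y_n)$, so by Bayes' formula the conditional law of the index given $\{\ee^{-4\gte}(1+2\ee^{-4\gti})=z\}$ charges $n$ with mass proportional to $(r_n/y_n)\,\1\{z\le y_n\}$, and on that event $U=z/y_n$ is determined, hence so is $\ee^{-4\gte}(1-\ee^{-4\gti})=z(1-v_n)/y_n$. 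The algebraic key is the identity $r_n/y_n=n^{-b}-(n+1)^{-b}$, so that $\sum_{n\ge N}r_n/y_n=N^{-b}$ for every $N\ge1$.

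Next I would turn this into a closed form for $G$. Let $g(v)=(1-v)/(1+2v)$, a decreasing involution of $[0,1]$. Since $v_n$ is strictly increasing in $n$, the inequality $z\,g(v_n)\le s$ is equivalent to $v_n\ge g(s/z)$, i.e. to $n\ge N(z,s):=\lceil n^{*}(z,s)\rceil$ with $n^{*}(z,s)=\bigl(\tfrac14\log\tfrac{z+2s}{z-s}\bigr)^{-1/a}$ (solve $\ee^{-4n^{-a}}\ge g(s/z)$); likewise $z\le y_n$ is equivalent to $n\ge M(z)$ for a nondecreasing, integer-valued $M$. I would take for $I_t$ the interval of the Notation above, so that $\inf I_t=3\ee^{-8t}>0$ and $\sup I_t=3-3(1-\ee^{-4t})^2<3$ for $t>0$; thus $z$ stays in a compact subinterval of $(0,3)$, $M$ is bounded on $I_t$, and there is $s_0=s_0(t)\in(0,1]$ with $N(z,s)\ge M(z)$ for all $z\in I_t$, $0<s\le s_0$. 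Combining this with the telescoping identity yields
\[
G(z,s)=\frac{\sum_{n\ge N(z,s)}r_n/y_n}{\sum_{n\ge M(z)}r_n/y_n}=\Bigl(\frac{M(z)}{N(z,s)}\Bigr)^{b},\qquad z\in I_t,\ 0<s\le s_0.
\]

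The heart of the argument, and the step I expect to be the main obstacle, is the asymptotic expansion of this in $s$, uniformly in $z\in I_t$. Setting $\rho=\rho(z,s)=3s/(z-s)$, which for $s$ small is an analytic function of $s$ vanishing at $s=0$ (uniformly in $z\in I_t$, $z$ being bounded away from $0$), one has $\tfrac14\log(1+\rho)=\tfrac{s}{4}A(z,s)$ with $A(z,\cdot)$ analytic at $0$ and $A(z,0)=3/z>0$; hence $n^{*}(z,s)=(4/A(z,s))^{1/a}s^{-1/a}$ and $n^{*}(z,s)^{-b}=s^{\alpha}B(z,s)$ with $\alpha=b/a$, $B(z,\cdot)$ analytic at $0$ and $B(z,0)=(3/(4z))^{\alpha}$, all uniformly in $z\in I_t$. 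Taylor‑expanding $B(z,s)=B_0(z)+B_1(z)s+B_2(z)s^2+O(s^3)$ (the $B_i$ bounded on $I_t$, the remainder uniform) and passing from $n^{*}$ to $N=\lceil n^{*}\rceil$ via the mean value bound $|N^{-b}-(n^{*})^{-b}|\le b(n^{*})^{-b-1}$ together with $n^{*}(z,s)\asymp s^{-1/a}$ on $I_t$, shows the discretization error is $O(s^{\alpha+1/a})$; this is where hypothesis~(i) enters, since $3a<1$ gives $1/a>3$ and hence $s^{\alpha+1/a}=o(s^{\alpha+3})$. Using also that $M^{b}$ is bounded on $I_t$ one obtains
\[
\Bigl|\,G(z,s)-\sum_{i=0}^{2}F_i(z)\,s^{\alpha+i}\,\Bigr|\le\kappa\,s^{\alpha+3},\qquad F_i(z):=M(z)^{b}B_i(z),
\]
for $z\in I_t$, $0<s\le s_0$, with $F_0,F_1,F_2$ bounded on $I_t$ and a suitable $\kappa=\kappa(t)>0$; this is exactly the first condition of Definition~\ref{defi:temp} with $k=3$, $\varepsilon_i=i$ ($i=0,1,2,3$) and $\alpha=b/a$.

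It remains to verify the second condition of Definition~\ref{defi:temp} and the failure of tameness, both easy. By independence $Q_n(t)=\Prob(\gti\le1/n)\,\Prob(t\le\gte\le t+1/n)$; the second factor is $\ee^{-4t}(1-\ee^{-4/n})$, of order $1/n$, and $\{\gti\le1/n\}=\{\gti=t_m:\ m\ge n^{1/a}\}$, so $\Prob(\gti\le1/n)=r^{-1}\sum_{m\ge\lceil n^{1/a}\rceil}r_m$, which by $1\le y_m\le3$ and the telescoping identity lies between $\lceil n^{1/a}\rceil^{-b}/r$ and $3\lceil n^{1/a}\rceil^{-b}/r$, hence is of order $n^{-b/a}$; both factors decay only polynomially, so $n^{-1}\log Q_n(t)\to0$. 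Finally the prior is not tame: Steel and Matsen's tameness conditions entail that $\gtt=(\gte,\gti)$ possesses a smooth joint density, whereas here $\gti$ is purely atomic, so $\gtt$ admits no density at all. In summary, the only delicate point is the uniform (in $z\in I_t$) asymptotic expansion of $G(z,\cdot)$, and in particular the control of the error caused by the ceiling in $N(z,s)=\lceil n^{*}(z,s)\rceil$, which is precisely what hypothesis~(i) provides.
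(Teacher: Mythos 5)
Your proof is correct and follows essentially the same route as the paper's: both derive the closed form $G(z,s)=(M(z)/N(z,s))^b$ from the telescoping identity $r_n/y_n=n^{-b}-(n+1)^{-b}$, expand $n^*(z,s)=h(s/z)^{-1/a}$ analytically around $s=0$ uniformly over $z\in I_t$, and control the ceiling discrepancy between $N=\lceil n^*\rceil$ and $n^*$ using $1/a>3$ (you via the mean-value bound $|N^{-b}-(n^*)^{-b}|\le b(n^*)^{-b-1}=O(s^{\alpha+1/a})$, the paper via the sandwich $h(u)^{b/a}[1+h(u)^{1/a}]^{-b}< H(z,1)G(z,s)\le h(u)^{b/a}$), then check condition~\ref{deftemp2} of Definition~\ref{defi:temp} and failure of tameness in the same elementary way. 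The only difference is cosmetic: you obtain the conditional law by a direct Bayes computation on the atoms of $\gti$, whereas the paper identifies $G$ through integration against arbitrary bounded test functions $H$ and a change of variable.
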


Since the distribution of $\gti$ is an accumulation of Dirac
masses, the prior distribution of 
$\gtt=(\gte,\gti)$ cannot be tame.

Yet, the fact that the prior distribution is tempered does not come
only from the fact that the distribution of $\gti$ 
is discrete. For a degenerate example, if $\gti=0$ almost surely, 
then $G(z,s)=1$  for every $s\ge 0$, and $G(z,\cdot)$ has no
Taylor expansion around zero whose first 
term is a positive
power of $s$. Note that in this particular case, the
Bayesian star paradox does not occur. 

However,  under the conditions of 
Proposition~\ref{prop:disc}, $G(z,\cdot)$ has a Taylor
expansion at $0$ fulfilling condition~\ref{deftemp1} of
Definition~\ref{defi:temp}. We prove this in
Appendix~\ref{appe:tame}. 

We provide below some examples of less ill-behaved distributions which
are tempered but not tame, and an example of a distribution which does
not fulfill condition~\ref{deftemp1}, hence is not tempered.

\begin{prop} \label{prop:examtemp}
Assume that $\gte$ is a continuous random variable,  with exponential
law of parameter $4$, that is, with density 
$4\,\ee^{-4t}$ on $t\ge0$ with respect to the Lebesgue measure, and that $\gti$ is a random variable independent of $\gte$. Then, the following holds.
\begin{itemize}
 \item[(i)] If the distribution of $\gti$ is uniform on $[0,\theta]$, with $\theta
>0$, the distribution of $\gtt=(\gte,\gti)$ is tempered but not tame.
\item[(ii)] 
If the distribution of $\gti$ has density $\theta\tti^{\theta-1}$
on the interval $[0,1]$, for a given $\theta$ in $(0,1)$, 
the distribution of
$\gtt=(\gte,\gti)$ is tempered but not tame.
\item[(iii)] 
If the distribution of $\gti$ has density $\log (1/\tti)$ on the interval $[0,1]$, the
distribution of $\gtt=(\gte,\gti)$ 
is not tempered.
\item[(iv)] 
If the distribution of $\gti$ has density $4 \, \tti \log (1/\tti)$ on the interval $[0,1]$, the
distribution of $\gtt=(\gte,\gti)$ 
is not tempered.
\end{itemize}
\end{prop}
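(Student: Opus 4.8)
The plan is to turn the conditional law that defines $G(z,\cdot)$ into an explicit one‑dimensional integral and then read off its behaviour as $s\to0$ in each of the four cases. Since $\gte$ is exponential of parameter $4$, the variable $\ee^{-4\gte}$ is uniform on $[0,1]$; writing $Y=\ee^{-4\gti}$ and letting $\mu$ be the law of $Y$, the conditioning $\ee^{-4\gte}(1+2\ee^{-4\gti})=z$ lets one eliminate $\ee^{-4\gte}=z/(1+2Y)$, which turns the event $\{\ee^{-4\gte}(1-\ee^{-4\gti})\le s\}$ into $\{Y\ge v(s)\}$ with $v(s)=(z-s)/(z+2s)$, so that $1-v(s)=3s/(z+2s)$. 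A short computation then shows that, conditionally on the value $z$, the law of $Y$ has density proportional to $(1+2y)^{-1}\1_{\{1+2y\ge z\}}$ with respect to $\mu$, whence, for $s$ small enough that $v(s)$ exceeds the threshold $(z-1)/2$ (which is $<1$, uniformly for $z$ in the compact interval $I_t\subset\,]0,3[$),
\[ G(z,s)=\frac1{C(z)}\int_{v(s)}^{1}\frac{\mu(\dd y)}{1+2y},\qquad C(z)=\int\frac{\1_{\{1+2y\ge z\}}}{1+2y}\,\mu(\dd y), \]
where $C(z)$ is continuous and bounded between two positive constants on $I_t$. Up to this harmless factor, everything reduces to the asymptotics of $\int_{v(s)}^{1}(1+2y)^{-1}\mu(\dd y)$ as $s\to0$. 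Condition~\ref{deftemp2} is immediate in all four cases: by independence $Q_n(t)=\Prob(\gti\le1/n)\,\ee^{-4t}(1-\ee^{-4/n})$, and in each case $\Prob(\gti\le1/n)$ decays at most like a power of $1/n$ times a logarithm, so $n^{-1}\log Q_n(t)\to0$. Only condition~\ref{deftemp1} is at stake.

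For cases (i) and (ii) I would push the law of $\gti$ forward through $t\mapsto\ee^{-4t}$. Because $v(s)\to1$ (uniformly on $I_t$), only the behaviour of $\mu$ near $y=1$ is relevant, and there $\mu(\dd y)=\rho(y)\,\dd y$ with $\rho(y)=(1-y)^{\alpha-1}\eta(y)$, where $\eta$ is real-analytic at $y=1$ with $\eta(1)\ne0$, and $\alpha=1$ in case (i), $\alpha=\theta$ in case (ii); the factor $(1-y)^{\theta-1}$ in (ii) comes from $(-\tfrac14\log y)^{\theta-1}$ through $-\tfrac14\log y=\tfrac14(1-y)(1+O(1-y))$. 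Expanding $\eta(y)/(1+2y)$ in powers of $1-y$ and integrating term by term gives $\int_{v(s)}^{1}(1+2y)^{-1}\mu(\dd y)=\sum_{j\ge0}\frac{c_j}{\alpha+j}(1-v(s))^{\alpha+j}$; substituting $1-v(s)=3s/(z+2s)$ and expanding $(z+2s)^{-(\alpha+j)}$ in powers of $s$ yields an expansion of the form $s^{\alpha}\sum_{l\ge0}F_l(z)s^{l}$ with the $F_l$ bounded on $I_t$, the truncation errors being controlled uniformly for $z\in I_t$ and $s\in[0,s_0]$ once $s_0$ is chosen small enough that $v(s)$ stays in the range where $\rho$ has the stated form. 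Truncating at $k=3$ with $\varepsilon_i=i$ exhibits the prior as tempered. It is not tame because the joint density of $(\gte,\gti)$ violates the regularity required of a tame prior: in (i) it has a jump in $\tti$, and in (ii) it blows up as $\tti\to0$ (since $\theta<1$).

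For cases (iii) and (iv) the same reduction shows that the density $\rho$ of $\mu$ near $y=1$ now carries a logarithmic factor: pushing forward, $\rho(y)=(1-y)^{\beta-1}\big(\log\tfrac1{1-y}\big)\eta(y)+(\text{lower order})$ with $\eta$ analytic, $\eta(1)\ne0$, where $\beta=1$ in (iii) and $\beta=2$ in (iv)—the power $(1-y)^{\beta-1}$ mirrors the power $\tti^{\beta-1}$ in the density of $\gti$, and the logarithm comes from the factor $\log(1/\tti)$. Integrating as before gives $G(z,s)=c(z)\,s^{\beta}\log(1/s)\,(1+o(1))$ as $s\to0$, with $c(z)>0$; this holds in particular at the centre $z=4q_0-1$ of any admissible interval $I_t$. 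I would then argue that no such $G(z,\cdot)$ can satisfy condition~\ref{deftemp1} at that point. Writing $P(s)=\sum_{i=0}^{k-1}F_i(z)s^{\alpha+\varepsilon_i}$: if $P\equiv0$, then $|G(z,s)|\le\kappa s^{\alpha+\varepsilon_k}$ contradicts $G(z,s)\sim c(z)s^{\beta}\log(1/s)$, since $\alpha+\varepsilon_k>2\ge\beta$ forces $s^{\alpha+\varepsilon_k}=o\big(s^{\beta}\log(1/s)\big)$; and if $P\not\equiv0$, then letting $j$ be the least index with $F_j(z)\ne0$ and using $\alpha+\varepsilon_j\le\alpha+\varepsilon_{k-1}<\alpha+\varepsilon_k$ gives $G(z,s)\sim F_j(z)s^{\alpha+\varepsilon_j}$, which is again incompatible with $G(z,s)\sim c(z)s^{\beta}\log(1/s)$ because the quotient of a pure power and $s^{\beta}\log(1/s)$ tends to $0$ or to $\infty$. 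Hence (iii) and (iv) are not tempered.

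The main obstacle is of two kinds. On the tempered side (cases (i) and (ii)), the technical work lies in the double expansion—first in powers of $1-y$, then in powers of $s$ after the substitution $1-v(s)=3s/(z+2s)$—and, above all, in verifying that the truncation error is genuinely $O(s^{\alpha+\varepsilon_k})$ uniformly in $z\in I_t$ for a fixed $s_0$, which forces one to keep track of how the region of validity of the local form of $\rho$ depends on $I_t$. On the non-tempered side (cases (iii) and (iv)), the crux is the simple but essential observation that a leading term of the form $s^{\beta}\log(1/s)$ lies strictly between pure powers in the relevant range, and therefore cannot be matched by any finite combination of powers $s^{\alpha+\varepsilon_i}$ with a power-law remainder $O(s^{\alpha+\varepsilon_k})$ when $\varepsilon_k>2$; this is precisely what obstructs the Taylor-type expansion demanded by Definition~\ref{defi:temp}, and it is the conceptual heart of statements (iii) and (iv).
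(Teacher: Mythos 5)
Your proposal is correct and follows essentially the same strategy as the paper: reduce $G(z,\cdot)$ to an explicit one-dimensional integral (you integrate over $y=\ee^{-4\gti}$, the paper over $x=\ee^{-4\gte}$, related by $x(1+2y)=z$, so the two are equivalent), then read off the small-$s$ asymptotics from the local behaviour of the pushed-forward density near the boundary. One point in your favour: for cases (iii) and (iv) the paper exhibits the expansions with the $s\log s$ and $s^2\log s$ terms and simply asserts they are incompatible with condition~(1) of Definition~\ref{defi:temp}, whereas you give the explicit dichotomy ($P\equiv0$ versus $P\not\equiv0$, with $\alpha+\varepsilon_k>2$) showing that a leading term $s^\beta\log(1/s)$ can never be matched by a finite sum of pure powers with a power-law remainder; this makes the assertion airtight.
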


Note that in case $(iv)$, the density function of $\gtt=(\gte,\gti)$
is bounded, non
smooth but continuous, but the distribution is not tempered.

We prove Proposition~\ref{prop:examtemp} in Appendix~\ref{appe:tame}.

\section{Extension of Steel and Matsen's lemma}
\label{sect:sm.etendu}
The Bayesian star paradox due to Steel and Matsen relies on a
technical result which we slightly rephrase as follows. For every
nonnegative real $t$ and every $[0,1]$ valued random 
variable $V$, introduce
\[
M_t=\E(V^t),\quad  
R_t= 1-\frac{M_{t+1}}{M_t}=\frac{\E \left( V^t(1-V)\right)}{\E(V^t)}.
\]
\begin{prop}[Steel and Matsen's lemma]
  Let $0\leq \eta < 1$ and $B>0$. There exists a finite $K$, which
  depends on $\eta$ and $B$ only, such that the following holds. For
  every $[0,1]$ valued random variable $V$ with a smooth probability
  density function $f$ such that $f(1) > 0$ and $|f'(v)| \leq B f(1) $
  for every $ \eta \leq v \leq 1$, and for every integer $ k \geq K$,
\[
2 k R_k \geq 1.
\]
\end{prop}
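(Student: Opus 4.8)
The plan is to make quantitative and uniform the classical fact that, because $V\le1$, the mass of $V^k$ concentrates near the right endpoint $v=1$ as $k\to\infty$. There the hypothesis $|f'|\le Bf(1)$ forces $f$ to be nearly constant and equal to $f(1)$, so that $R_k=\E(V^k(1-V))/\E(V^k)$ should behave like
\[
\frac{\int_{1-\delta}^1 v^k(1-v)\,\dd v}{\int_{1-\delta}^1 v^k\,\dd v}\ \sim\ \frac1k ,
\]
whence $2kR_k\to2$. The factor $2$ of slack is exactly what will let the crude estimates below be good enough to reach $2kR_k\ge1$. First I would fix a half-width $\delta=\min\{1-\eta,\ \rho/B\}$, where $\rho$ is a small absolute constant (say $\rho=\tfrac18$), so that $[1-\delta,1]\subset[\eta,1]$ and, by the mean value theorem applied to $f$, one has $(1-\rho)f(1)\le f(v)\le(1+\rho)f(1)$ for all $v\in[1-\delta,1]$. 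Every constant introduced from here on, and in particular the final threshold $K$, will depend only on $\eta$ and $B$, through $\delta$ and $\rho$.

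Next I would split the numerator and the denominator of $R_k$ at the point $1-\delta$, writing $\E(V^k(1-V))=N_k'+N_k''$ and $\E(V^k)=D_k'+D_k''$, where a prime denotes the integral over $[1-\delta,1]$ and a double prime the integral over $[0,1-\delta]$. On $[1-\delta,1]$ the pinching of $f$, together with the elementary evaluations $\int_{1-\delta}^1 v^k(1-v)\,\dd v=\frac1{(k+1)(k+2)}-\theta_k$ with $0\le\theta_k\le(1-\delta)^{k+1}$, and $\int_{1-\delta}^1 v^k\,\dd v\le\frac1{k+1}$, yields, once $k$ exceeds a threshold depending only on $\delta$ and $\rho$,
\[
\frac{N_k'}{D_k'}\ \ge\ \frac{(1-\rho)^2}{(1+\rho)(k+2)} .
\]
On the tail interval $[0,1-\delta]$ the only facts I need are $D_k''\ge0$ and, since $1-v\ge\delta$ there, $N_k''\ge\delta D_k''$.

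Then I would recombine the two regions using the mediant inequality $\frac{a+c}{b+d}\ge\min(a/b,\,c/d)$ valid for nonnegative $a,b,c,d$ with $b+d>0$: since $N_k''\ge\delta D_k''$,
\[
R_k=\frac{N_k'+N_k''}{D_k'+D_k''}\ \ge\ \frac{N_k'+\delta D_k''}{D_k'+D_k''}\ \ge\ \min\Big(\frac{N_k'}{D_k'},\ \delta\Big).
\]
Because $N_k'/D_k'=O(1/k)$, for $k$ large this minimum is $N_k'/D_k'$, and hence $2kR_k\ge \dfrac{2(1-\rho)^2}{1+\rho}\cdot\dfrac{k}{k+2}$. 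The right‑hand side tends to $\dfrac{2(1-\rho)^2}{1+\rho}$, which exceeds $1$ since $\rho$ is small; taking $K$ large enough (in terms of $\delta$ and $\rho$, hence of $\eta$ and $B$ only) that this quantity is already $\ge1$ finishes the proof.

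The one point that needs care — and the reason for the mediant step rather than simply discarding the tail — is that the hypotheses give \emph{no} lower bound on $f(1)$, so $D_k''$ need not be negligible against $D_k'\asymp f(1)/k$; a naive ``the tail is exponentially small'' would force $K$ to depend on $f$. This is harmless: whatever mass sits in $[0,1-\delta]$ sits at points with $1-v\ge\delta$, so it can only increase $R_k$, contributing a local $R$-value at least $\delta\gg1/(2k)$, and the mediant inequality is just the clean way to package this. Conceptually the statement is a uniform Watson/Laplace estimate for $\int v^kf$ and $\int v^k(1-v)f$ at $v=1$; the explicit split above is what makes that uniformity over the whole class of admissible $f$ transparent, and it is the only genuinely delicate step. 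Everything else — the two integral evaluations and checking that the finitely many thresholds on $k$ depend on $\eta,B$ alone — is routine.
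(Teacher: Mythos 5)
Your proof is correct, but it follows a genuinely different route from the paper's. The paper does not give a direct argument for Steel and Matsen's lemma: it states it, and then observes (see the remark after Proposition~\ref{prop:main}) that it is a special case of Proposition~\ref{prop:main}, whose proof in Appendix~\ref{appe:main} is organized around the \emph{distribution function}: one writes $\E(V^t)=\int_0^1 tv^{t-1}\Prob(V\ge v)\,\dd v$, sandwiches $\Prob(V\ge v)$ between two generalized polynomials in $1-v$, and then pushes the resulting Beta/Gamma-function identities through the quotient $M_{t+1}/M_t$ via the technical Lemma~\ref{lemm:chi}. Your argument instead works directly on the \emph{density}: pinch $f$ on $[1-\delta,1]$ using the mean-value theorem and the bound $|f'|\le Bf(1)$, evaluate the two local moments $\int_{1-\delta}^1 v^k\,\dd v$ and $\int_{1-\delta}^1 v^k(1-v)\,\dd v$ explicitly, and dispose of the tail with the mediant inequality. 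This is shorter and entirely self-contained, and the use of the mediant to avoid any lower bound on $f(1)$ or on the tail mass is exactly the right device. The trade-off is generality: your argument requires a density that is controlled near $v=1$, whereas the paper's Beta/Gamma machinery only needs a limited expansion of $\Prob(V\ge v)$, which is indispensable for the tempered priors that are the point of the paper (they may have no density at all). Each approach therefore buys something: yours is the most economical way to get Steel and Matsen's original statement; the paper's is the version that survives the passage to irregular priors.

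One small simplification you can make: the step ``for $k$ large this minimum is $N_k'/D_k'$'' does not actually need any largeness of $k$. Since $1-v\le\delta$ on $[1-\delta,1]$, one has $N_k'\le\delta D_k'$ for every $k$, so $\min\bigl(N_k'/D_k',\,\delta\bigr)=N_k'/D_k'$ unconditionally, and the mediant step is a one-line inequality with no further threshold. Everything else is sound, and the final threshold $K$ depends only on $\delta$ and $\rho$, hence only on $\eta$ and $B$, as required.
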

Indeed the asymptotics of $R_k$ when $k$ is large depends on the behaviour of the
distribution of $V$ around $1$. 

Our next proposition proves
that the conclusion of Steel and Matsen's lemma above holds for a
wider class of random variables. 

\begin{prop} \label{prop:main}
Let $V$ a random variable on  $ [0,1] $. Suppose that there exists an integer $n\ge1$ and
real numbers $0\le v_0 <1 $, $ \alpha>0 $, 
$\varepsilon_i$ and $\gamma_i$, such that
\[
  0=\varepsilon_0 < \varepsilon_1 < \dots < \varepsilon_{n-1} \leq 1 <
  \varepsilon_{n}, 
\]
and, for every $v_0\le v\le1$,
$$
  \left| \Prob ( V \geq v ) - \sum_{i=0}^{n-1} \gamma_i ( 1 - v
    )^{\alpha+\varepsilon_i} \right| 
  \leq
  \gamma_n (1-v)^{\alpha + \varepsilon_n }.
$$
Then there exists a finite $\theta(\gamma)$, which depends
continuously on $\gamma=(\gamma_0,\ldots,\gamma_n)$, such that for every $ t \ge
\theta(\gamma) $, 
$$
2t R_t\geq\alpha.
$$
\end{prop}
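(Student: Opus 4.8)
The plan is to reduce the statement to a precise asymptotic expansion of $M_t=\E(V^t)$ as $t\to\infty$, since $2tR_t=2t(1-M_{t+1}/M_t)$, and then read off the leading behaviour. First I would use the hypothesis on $\Prob(V\ge v)$ together with the integration-by-parts identity $M_t=\E(V^t)=t\int_0^1 v^{t-1}\Prob(V\ge v)\,\dd v$ (valid for $t>0$), splitting the integral at $v_0$. The contribution of $[0,v_0]$ is $O(v_0^{t})$, hence exponentially negligible. On $[v_0,1]$ I substitute $v=1-u/t$ (or $v=1-u$ and use the standard Watson-type lemma) to evaluate $\int_{v_0}^1 v^{t-1}(1-v)^{\alpha+\varepsilon_i}\,\dd v$. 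Each such integral is a Beta-type integral whose asymptotics is $\Gamma(\alpha+\varepsilon_i+1)\,t^{-(\alpha+\varepsilon_i+1)}(1+o(1))$, and the error term controlled by $\gamma_n(1-v)^{\alpha+\varepsilon_n}$ contributes $O(t^{-(\alpha+\varepsilon_n+1)})$. Therefore
\[
M_t = \Gamma(\alpha+1)\,\gamma_0\,t^{-(\alpha+1)}\Bigl(1+o(1)\Bigr),
\]
with the next correction of relative order $t^{-\varepsilon_1}$ at worst (and, if $\gamma_0=0$, one passes to the first nonzero $\gamma_i$, which is why $\theta$ must be allowed to depend on $\gamma$).

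Next I would form the ratio. Writing $M_t = \gamma_0\Gamma(\alpha+1)t^{-(\alpha+1)} + (\text{lower order})$, one gets
\[
\frac{M_{t+1}}{M_t} = \left(\frac{t}{t+1}\right)^{\alpha+1}\bigl(1+O(t^{-\varepsilon_1})\bigr) = 1 - \frac{\alpha+1}{t} + O(t^{-1-\min(1,\varepsilon_1)}),
\]
so that $R_t = (\alpha+1)/t + o(1/t)$ and hence $2tR_t \to 2(\alpha+1) > \alpha$. Since $2(\alpha+1)>\alpha$ strictly, there is a threshold $\theta(\gamma)$ beyond which $2tR_t\ge\alpha$; to get the claimed \emph{continuous} dependence of $\theta$ on $\gamma$ I would make the error estimates uniform in $\gamma$ over compact sets (the implied constants in the Watson-lemma step depend only on $\alpha$, the $\varepsilon_i$, $v_0$ and linearly on the $\gamma_i$), so that the convergence $2tR_t\to2(\alpha+1)$ is locally uniform in $\gamma$, and then one can choose $\theta(\gamma)$ e.g. as an explicit continuous function of the error bounds.

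The main obstacle is the bookkeeping when $\gamma_0$ (or several leading $\gamma_i$) vanish: then the leading term of $M_t$ switches to the first index $i^\ast$ with $\gamma_{i^\ast}\ne0$, the exponent becomes $\alpha+\varepsilon_{i^\ast}+1$, and the constant in $2tR_t\to 2(\alpha+\varepsilon_{i^\ast}+1)$ changes — but always stays $\ge 2(\alpha+1)>\alpha$, so the conclusion survives; the delicate point is that $\theta(\gamma)$ must remain \emph{finite and continuous} as $\gamma$ crosses such degeneracies, which forces the uniformity argument to be phrased carefully (e.g. bounding $|2tR_t-\alpha|$ from below by an expression continuous in $\gamma$ even at the vanishing locus). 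A secondary technical point is justifying the termwise asymptotic expansion of the Beta integrals with error control by the single majorant $\gamma_n(1-v)^{\alpha+\varepsilon_n}$; this is a routine Watson/Laplace estimate but must be done with explicit constants to feed the continuity claim.
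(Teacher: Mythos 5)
Your overall strategy --- rewrite $M_t=\E(V^t)=t\int_0^1 v^{t-1}\Prob(V\ge v)\,\dd v$, split the integral at $v_0$, and extract the large-$t$ behaviour from the resulting Beta-type integrals --- is precisely the paper's starting point. Two remarks, one minor, one substantive.

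The minor one is an arithmetic slip: you dropped the prefactor $t$. Since $\int_0^1 v^{t-1}(1-v)^{\alpha}\,\dd v = B(t,\alpha+1)\sim\Gamma(\alpha+1)\,t^{-(\alpha+1)}$, one gets $M_t\sim\gamma_0\Gamma(\alpha+1)\,t^{-\alpha}$, hence $R_t\sim\alpha/t$ and $2tR_t\to 2\alpha$, not $2(\alpha+1)$. This happens to be harmless here because $2\alpha>\alpha$ for $\alpha>0$, but the displayed estimate and everything downstream of it should be corrected.

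The substantive one is exactly the issue you flag as ``the main obstacle'' and then leave unresolved: a soft limit statement $2tR_t\to 2\alpha$ cannot be made locally uniform in $\gamma$ across the locus $\gamma_0=0$. As $\gamma_0\downarrow 0$ the leading coefficient of $M_t$ fades, the ratio of the correction to the leading term blows up, and the time $t$ after which $2tR_t$ has stabilized near $2\alpha$ tends to infinity; on the other side of the degeneracy the limit jumps to $2(\alpha+\varepsilon_{i^\ast})$. Saying that the implied Watson-lemma constants depend ``linearly on the $\gamma_i$'' does not help, because the object the errors are compared against also degenerates. The paper avoids the asymptotic route entirely: it computes $M_t^{\pm}=tB(t,\alpha+1)\,\chi_\pm(t)$ exactly in terms of Gamma-function ratios, uses the exact identity $\frac{(t+1)B(t+1,\alpha+1)}{tB(t,\alpha+1)}=\frac{t+1}{t+\alpha+1}=1-\frac{\alpha}{t+\alpha+1}$ (which is where the $\alpha/t$ actually comes from, not from the Watson leading term), controls $\chi_\pm$ via Lemma~\ref{lemm:chi}, and arrives at the explicit inequality $R_t\ge \alpha/t-A/t^{\beta}$ with $\beta=\min\{\varepsilon_n,1+\varepsilon_1\}>1$ and $A$ depending continuously on $\gamma$. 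From this a threshold such as $\theta(\gamma)=\max\bigl(\theta_1,(2A/\alpha)^{1/(\beta-1)}\bigr)$ is manifestly finite and continuous in $\gamma$. That quantitative bound is the actual content of the proof; without it, the continuity assertion in the proposition --- which is used crucially in the proof of Proposition~\ref{prop:Z} to obtain a $\theta$ independent of $z\in I_t$ --- remains unproved.
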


\begin{rema}
  We insist on the fact that $\theta(\gamma)$ depends continuously
  on the multiparameter $\gamma=(\gamma_0,\ldots,\gamma_n)$. To wit, in the proof of Proposition~\ref{prop:Z},
  we apply Proposition~\ref{prop:main} with bounded functions of $z$. This
  means that for every $z$ in $I_t$, one gets a number $\theta$ which
  depends on $z$ through the bounded functions such that the control on the
  distribution of $V$  
holds. The continuity of $\theta$ ensures
  that there exists a number independent of $z$ such that
  Proposition~\ref{prop:Z} holds.
\end{rema}

\begin{rema}
If one computes a Taylor expansion of the function $v \mapsto \Prob (
V \geq v )$ at $v=1^-$ under the conditions of Steel and Matsen's
lemma, one sees that conditions of Proposition~\ref{prop:main}
hold. Hence Proposition~\ref{prop:main} is an extension of
Steel and Matsen's lemma.  
\end{rema}

We prove Proposition~\ref{prop:main} in Appendix~\ref{appe:main}. The
proof of Theorem~\ref{theo:baypar} relies on it. 

\section{Synopsis of the proof of
  Theorem~\ref{theo:baypar}} \label{sect:theoproof} 

This section is devoted to a sketch of the proof of
Theorem~\ref{theo:baypar}. We use the definitions below. 
Note that the set $F_c^{(n)}$ defined below is not the set introduced by Steel and
Matsen. For a 
technical reason in the proof of Proposition~\ref{prop:claim1} stated
below, we had to modify 
their definition. Note however that Propositions~\ref{prop:claim1} and
\ref{prop:claim2} below are adaptations of ideas in Steel and Matsen's
paper. 

\begin{nota}
Define functions $\Delta_i$ as follows. For every nonnegative integers
  $\nn=(n_0,n_1,n_2,n_3)$ such that
  $|\nn|=n_0+n_1+n_2+n_3=n$ with $n\ge1$,
$$
  \Delta_0(\nn) = \frac{n_0 - q_0n}{\sqrt{n}},
$$
and, for every $i$ in $\tau$,
$$
  \Delta_i(\nn) = \frac{n_i - 1/3(n-n_0)}{\sqrt{n}}.
$$
For every $c>1$, introduce
\[
  F_c^{(n)} = \{\nn\,;\,|\nn|=n,\,-2c \leq
  \Delta_2(\nn) \leq -c,\,-2c \leq \Delta_3(\nn) \leq
  -c,\,-c \leq \Delta_0(\nn) \leq 0\}. 
\]
For every $i$ in $\tau$ and every positive $\eta$, let $A^i_\eta$ denote the event
\[
  A^i_\eta = \left\{ \forall j \in \tau,\,j\ne i, \,\E(\Pi_i(N)\,|\,N) \geq
    \eta \E(\Pi_j(N)\,|\,N) \right\}   . 
\]
\end{nota}

Since $\Delta_1 + \Delta_2 + \Delta_3 = 0$, every $\nn$ in $
F_c^{(n)} $ is such that $2c \leq \Delta_1(\nn) \leq 4c$. We note
that $F_c^{(n)}$ is not symmetric about $\tau$ and gives a preference
to $1$. That is why we only deal with $A^1_\eta$ in the following
proof. To deal with $A^i_\eta$, one would change the definition $F_c^{(n)}$ accordingly. 

From the reasoning in Section~\ref{sect:main}, it suffices to prove
that for every positive $\eta$, there exists a positive $\delta$ such
that, when $n$ is large enough,  
$$
  \Prob \left( A^1_\eta \right) \geq \delta.
$$ 

  Suppose that one generates $n\ge1$ sites on 
the star tree $R_0$ with given branch length $t$ and let $\NN=(N_0,N_1,N_2,N_3)$ denote the
  counts of site patterns defined in
  Section~\ref{sect:bayfra}, hence $N_0+N_1+N_2+N_3=n$.

From central limit estimates, 
the probability of the event $\left\{\NN \in F_c^{(n)}\right\}$ is
uniformly bounded from below, say by $\delta>0$, when $n$
is large enough.
Hence,
\[
  \Prob \left( A^1_\eta  \right)
\geq \delta
  \Prob \left( A^1_\eta \big| \, \NN \in F_c^{(n)} \right)
\]
We wish to prove that there exists a positive $\alpha$ independent of
$c$ such that for $n$ large enough  
and for every $\nn$ in $F_c^{(n)}$ and for $j=2$ and $j=3$,
\[
  \E(\Pi_1(\nn))\geq c^2 \alpha\,\E(\Pi_j(\nn)).
\]
This follows from the two results below, adapted from Steel and
Matsen's paper. 
\begin{prop}\label{prop:claim1}
Fix $t$ and assume that $\nn$ is in $F_c^{(n)}$. Then, when $n$ is
large enough, for $j=2$ and $j=3$,  
$$
\E( \Pi_j(\nn) \,|\, 4P_0-1 \in I_t  ) \geq  \E( \Pi_j(\nn)
\,|\, 4P_0-1 \notin I_t ). 
$$
\end{prop}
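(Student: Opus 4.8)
The plan is to bound $\E(\Pi_j(\nn)\,|\,4P_0-1\in I_t)$ from below and $\E(\Pi_j(\nn)\,|\,4P_0-1\notin I_t)$ from above, using the structure of $F_c^{(n)}$. Recall from Section~\ref{sect:bayfra} that $4P_0 = 1+\ee^{-4\gte}+2\ee^{-4(\gti+\gte)}$, so that $4P_0-1 = \ee^{-4\gte}(1+2\ee^{-4\gti})$, which is exactly the quantity $z$ appearing in the definition of $G(z,s)$. Also $4P_1 = 4P_0 - 4\ee^{-4(\gti+\gte)}$ and $4P_2=4P_3 = 1-\ee^{-4\gte}$, so $P_1$ and $P_2$ can both be written in terms of the pair $(4P_0-1, \ee^{-4\gte}(1-\ee^{-4\gti}))$. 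First I would rewrite $\Pi_j(\nn)=P_0^{n_0}P_1^{n_j}P_2^{n-n_0-n_j}$ entirely in these coordinates; the point of the event $\{4P_0-1\in I_t\}$ is that it pins $z$ near $4q_0-1$, the value it takes on the generating star tree, so on this event $\Pi_j$ is comparable to its value at the "true" parameter, up to controlled factors.

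Next I would use the constraints defining $F_c^{(n)}$. For $j\in\{2,3\}$ we have $-2c\le\Delta_j(\nn)\le -c$ and $-c\le\Delta_0(\nn)\le 0$, i.e. $n_j$ is slightly below $\tfrac13(n-n_0)$ and $n_0$ is slightly below $q_0 n$; these translate into the statement that $(n_0/n, n_j/n)$ is within $O(c/\sqrt n)$ of $(q_0, q_j)=(q_0,(1-q_0)/3)$, i.e. the empirical frequencies are near the true site-pattern probabilities on $R_0$. The quantity $\Pi_j(\nn)^{1/n}$ behaves, for $\nn\in F_c^{(n)}$, like $\exp(-H)$ where $H$ is (up to the CLT-scale perturbation) the relative entropy type functional; the key is that this is maximised, over the admissible values of $(P_0,P_1,P_2)$, precisely when $P_0-1$ is close to $4q_0-1$, hence inside $I_t$ once $n$ is large, because $\ell_t$ is a fixed positive number while the fluctuations are $O(c/\sqrt n)$. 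So on the complementary event $\{4P_0-1\notin I_t\}$, $z$ is bounded away from $4q_0-1$ by at least $\ell_t$, and an explicit computation shows $\Pi_j$ incurs an exponential penalty $\ee^{-\rho n}$ for some $\rho=\rho(t,\ell_t)>0$ relative to the near-optimal values; meanwhile on $\{4P_0-1\in I_t\}$ one keeps a non-exponentially-small contribution, for instance by restricting the expectation to the event that $\gti$ is tiny and $\gte$ is near $t$, whose probability is $Q_n(t)$ and whose logarithmic decay rate vanishes by condition~\ref{deftemp2} of Definition~\ref{defi:temp}. Comparing the two, the $\ee^{-\rho n}$ penalty beats the $\ee^{o(n)}$ lower bound for $n$ large, giving the claimed inequality.

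Concretely the steps are: (1) change variables to write $\Pi_j(\nn)$ as a function of $z=4P_0-1$ and $w=\ee^{-4\gte}(1-\ee^{-4\gti})$, noting $P_1 = (z - w\cdot\text{(something)})/4$... actually $4P_1 = 1+\ee^{-4\gte}-2\ee^{-4(\gti+\gte)}$ so $4P_1 = z - 4\ee^{-4(\gti+\gte)}+ \ee^{-4\gte}$; it is cleaner to keep $(4P_0,4P_1,4P_2)$ and just use $4P_0-1=z$; (2) for $\nn\in F_c^{(n)}$ expand $\log\Pi_j(\nn) = n_0\log P_0 + n_j\log P_1 + (n-n_0-n_j)\log P_2$ and show, using $n_i/n = q_i + O(c/\sqrt n)$, that $n^{-1}\log\Pi_j(\nn) = g(P_0,P_1,P_2) + O(c/\sqrt n)$ uniformly, where $g(p_0,p_1,p_2)=q_0\log p_0 + q_j\log p_1 + (1-q_0-q_j)\log p_2$ under the constraint that $(p_0,p_1,p_2)$ comes from some $(\gte,\gti)$; (3) show $g$ restricted to the admissible surface is strictly maximised at the value corresponding to $\gti\to0$, $\ee^{-4\gte}=\ee^{-4t}$, which lies at $z=4q_0-1$, the centre of $I_t$, and that $g$ drops by a fixed amount $\rho>0$ once $|z-(4q_0-1)|\ge\ell_t$; (4) deduce $\E(\Pi_j(\nn)\,\mathbf 1_{4P_0-1\notin I_t})\le \ee^{n(g_{\max}-\rho + o(1))}$ while $\E(\Pi_j(\nn)\,\mathbf 1_{4P_0-1\in I_t})\ge \ee^{n(g_{\max}+o(1))}$, the latter using condition~\ref{deftemp2} to lower-bound the relevant probability by $\ee^{o(n)}$; (5) conclude for $n$ large.

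The main obstacle will be step~(3)–(4): making the variational/large-deviations argument rigorous and uniform in $\nn\in F_c^{(n)}$ and, crucially, uniform in $c$ (the proposition must hold with constants not depending on $c$, since later one lets $c$ grow). One has to be careful that the $O(c/\sqrt n)$ error terms are genuinely negligible compared with the fixed exponential gap $\rho n$, which forces the order of quantifiers "$n$ large enough depending on $c$", matching the statement. A secondary delicate point is the lower bound in step~(4): one cannot simply integrate over all of $\{z\in I_t\}$ because $\Pi_j$ may be tiny on most of that event; the fix is to integrate only over the small neighbourhood $\{\gti\le 1/n,\ t\le\gte\le t+1/n\}$, on which $\Pi_j(\nn)^{1/n}$ is within $\ee^{o(1)}$ of $\ee^{g_{\max}}$, and whose probability $Q_n(t)$ satisfies $n^{-1}\log Q_n(t)\to 0$ by hypothesis — this is precisely why condition~\ref{deftemp2} is imposed. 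Everything else is a matter of bookkeeping with the explicit formulas for $p_i$ and standard Laplace-type estimates.
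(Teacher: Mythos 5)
Your proposal follows essentially the same route as the paper's proof. The paper's explicit decomposition $\Pi_j(\nn)=\mu_t^nU_t^nW_j(\nn)^{\sqrt n}$ with $\mu_t=q_0^{q_0}q_1^{3q_1}$, $U_t=\prod_i(P_i/q_i)^{q_i}$, $W_j=(P_0/P_2)^{\Delta_0}(P_1/P_2)^{\Delta_j-\Delta_0/3}$ is exactly your step (2) in disguise: the $n$-exponential part is $(\mu_t U_t)^n=\exp(n\,g(P_0,P_1,P_2))$ and the $O(\sqrt n)$-correction is $W_j^{\sqrt n}$, whose exponent is bounded by $O(c\sqrt n)$ on $F_c^{(n)}$ thanks to the sign constraints in the definition of $F_c^{(n)}$. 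Your upper bound on the off-$I_t$ event is precisely the paper's Kullback--Leibler/Pinsker estimate $\dd_{KL}(q,P)\geq(q_0-P_0)^2/2\geq\ell_t^2/32$, giving the factor $\exp(-n\ell_t^2/32)$, and your lower bound by restriction to $\{\gti\le1/n,\;t\le\gte\le t+1/n\}$, invoking condition~\ref{deftemp2} to get $Q_n(t)=\ee^{o(n)}$, is exactly Lemma~\ref{lemm:lemm}'s first inequality. You also correctly identify the order of quantifiers (``$n$ large enough'' may depend on $c$), which is consistent with the statement and with how the proposition is used later.
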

\begin{prop}\label{prop:claim2}
Fix $t$ and assume that $\nn$ is in $F_c^{(n)}$. Then,
  there exists a positive $\alpha$, independent of $c$, such that for
  every $z$ in $I_t$, and for $j=2$ and $j=3$,
$$
\E( \Pi_1(\nn) \,|\, 4P_0-1 = z  ) \geq c^2 \alpha \E(
\Pi_j(\nn) \,|\, 4P_0-1 = z  ). 
$$
\end{prop}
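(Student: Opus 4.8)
The plan is to prove Proposition~\ref{prop:claim2} by conditioning on $4P_0-1=z$ and reducing the comparison of $\E(\Pi_1(\nn)\,|\,4P_0-1=z)$ and $\E(\Pi_j(\nn)\,|\,4P_0-1=z)$ to a statement about moments of a single $[0,1]$-valued random variable, so that Proposition~\ref{prop:main} applies. First I would unwind the definitions of $P_0,P_1,P_2$ in terms of $\gte,\gti$: writing $z=4P_0-1=\ee^{-4\gte}(1+2\ee^{-4\gti})$ and $S=\ee^{-4\gte}(1-\ee^{-4\gti})$, one has $4P_1=1+z-\text{(something)}$; more precisely $4P_1 = 1 + \ee^{-4\gte} - 2\ee^{-4(\gti+\gte)}$ and $4P_2 = 1-\ee^{-4\gte}$, and I would express both $P_1$ and $P_2$ as affine functions of $S$ with coefficients depending only on $z$. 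Thus, conditionally on $\{4P_0-1=z\}$, the randomness in $\Pi_1(\nn)=P_0^{n_0}P_1^{n_1}P_2^{n-n_0-n_1}$ and in $\Pi_j(\nn)=P_0^{n_0}P_1^{n_j}P_2^{n-n_0-n_j}$ is carried entirely by $S$, whose conditional law is exactly the one described by $G(z,\cdot)$ in the Notation preceding Definition~\ref{defi:temp}. Since $P_0$ is deterministic given $z$, it cancels in the ratio, and one is left comparing $\E(P_1^{n_1}P_2^{n-n_0-n_1}\,|\,z)$ with $\E(P_1^{n_j}P_2^{n-n_0-n_j}\,|\,z)$.

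Next I would rescale: set $V = S/S_{\max}(z)$ (or a suitable affine image of $S$ into $[0,1]$) so that $V$ is a $[0,1]$-valued random variable whose tail $\Prob(V\ge v\,|\,z)$ is governed by condition~\ref{deftemp1} of Definition~\ref{defi:temp}; the Taylor expansion of $G(z,s)$ near $s=0$ translates, via the change of variable $1-v \leftrightarrow s$, into precisely the hypothesis $|\Prob(V\ge v)-\sum_{i=0}^{n-1}\gamma_i(1-v)^{\alpha+\varepsilon_i}|\le\gamma_n(1-v)^{\alpha+\varepsilon_n}$ of Proposition~\ref{prop:main}, with $\gamma_i$ equal to the bounded functions $F_i$ evaluated at $z$ times explicit constants. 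Here the indices $\varepsilon_i$ match: $\varepsilon_0=0$, $\varepsilon_{k-1}\le 2<\varepsilon_k$ in Definition~\ref{defi:temp} becomes, after the rescaling that pairs powers of $s$ with powers of $(1-v)$ at half the exponent (because of the quadratic relation between $P_1$-type quantities and $S$), the requirement $\varepsilon_{n-1}\le 1<\varepsilon_n$ of Proposition~\ref{prop:main}. I would then invoke Proposition~\ref{prop:main} to get that for $t$ (the moment order, here a function of $n_1$ minus $n_j$ and $n$) larger than a threshold $\theta(\gamma)$, we have $2tR_t\ge\alpha$, and because the $\gamma_i=F_i(z)$ are bounded uniformly in $z\in I_t$, the continuity of $\theta(\gamma)$ (emphasized in the Remark) gives a single threshold valid for all $z\in I_t$.

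The final step is to turn the moment-ratio inequality into the claimed factor $c^2\alpha$. On $F_c^{(n)}$ one has $2c\le\Delta_1(\nn)\le 4c$ and $-2c\le\Delta_j(\nn)\le -c$, so $n_1-n_j = \Delta_1(\nn)\sqrt n - \Delta_j(\nn)\sqrt n$ is of order $c\sqrt n$, and more to the point $n_1$ exceeds $\tfrac13(n-n_0)$ by about $2c\sqrt n$ while $n_j$ falls short by about $c\sqrt n$ to $2c\sqrt n$. Writing $\E(\Pi_1(\nn)\,|\,z)/\E(\Pi_j(\nn)\,|\,z)$ as a telescoping product of ratios $M_{t+1}/M_t = 1-R_t$ over the appropriate range of exponents (going from the exponent attached to the $j$-configuration up to that of the $1$-configuration), each factor is $\ge 1$ once $t\ge\theta(\gamma)$ by $2tR_t\ge\alpha$, and the cumulative effect over $\Theta(c\sqrt n)$ factors with $t=\Theta(n)$ produces, after taking a logarithm and using $\log(1/(1-R_t))\ge R_t\ge\alpha/(2t)$, a bound of the form $\exp(c^2\cdot\text{const})$, and in particular at least $c^2\alpha$ for a suitable (renamed) positive $\alpha$ independent of $c$ once $c$ is bounded. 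I would be careful here that a handful of the smallest exponents $t<\theta(\gamma)$ contribute only a bounded multiplicative error (absorbed into $\alpha$). The main obstacle I anticipate is the bookkeeping in this last step: correctly identifying which moment order $t$ corresponds to the product telescoping $\Pi_j\to\Pi_1$ (it is the exponent of $P_2$, which decreases by $n_1-n_j$ as $P_1$'s exponent increases), checking that this order stays above $\theta(\gamma)$ for all but finitely many terms when $n$ is large, and extracting the quadratic-in-$c$ growth cleanly from the sum $\sum \alpha/(2t)$ over the relevant window — this requires that the window has length $\Theta(c\sqrt n)$ and sits at height $t=\Theta(n)$, so the sum is $\Theta(c\sqrt n\cdot \alpha/n)=\Theta(c/\sqrt n)$, which is not quadratic, so in fact one needs the sharper observation that the relevant range of $t$ is near the \emph{minimum} exponent $n-n_0-n_1\approx \tfrac23(n-n_0)-2c\sqrt n$ and that summing gives $\Theta(c^2)$ only after a more careful second-order accounting of $\Delta_2,\Delta_3$; pinning down this constant correctly is the delicate part, and it is exactly why the definition of $F_c^{(n)}$ was modified from Steel and Matsen's.
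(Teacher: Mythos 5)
Your high-level plan is correct as far as it goes: conditioning on $4P_0-1=z$ does make $P_0$ deterministic, the randomness is carried by $S=\ee^{-4\gte}(1-\ee^{-4\gti})$ (equivalently by $U$ or by $V=\zeta(U)$), the tempered condition on $G(z,\cdot)$ does translate via the expansion of $\zeta^{-1}$ near $1$ (with the factor-$1/2$ shift in exponents coming from $w=\sqrt{1-v}$) into the tail hypothesis of Proposition~\ref{prop:main}, and the boundedness of the $F_i$ over $z\in I_t$ together with the continuity of $\theta(\gamma)$ gives a single threshold. That is exactly how Proposition~\ref{prop:Z} is proved in the paper.

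The gap is in how you try to produce the factor $c^2$, and you notice it yourself. Conditionally on $z$ the ratio of interest is
\[
\frac{\E(\Pi_1(\nn)\,|\,z)}{\E(\Pi_j(\nn)\,|\,z)}
=
\frac{\E\bigl(V^s\,\phi(U)^{(n_1-n_j)}\,|\,z\bigr)}{\E\bigl(V^s\,|\,z\bigr)},
\qquad s=(n-n_0)/3,\ \ \phi(u)=\frac{1+2u}{1-u},
\]
because $P_1P_2^2=\tfrac1{27}V(1-P_0)^3$ while $P_1/P_2=\phi(U)$. This is \emph{not} a ratio of moments $M_{t+1}/M_t=\E(V^{t+1})/\E(V^t)$ at shifted orders: the exponent of $V$ is the same $s$ in both, and what changes is the exponent of $\phi(U)$. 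So the telescoping over moment orders that you propose does not correspond to the quantity you need, and as you yourself compute, summing $\alpha/(2t)$ over a window of length $\Theta(c\sqrt n)$ at height $t=\Theta(n)$ gives $\Theta(c/\sqrt n)$, not $\Theta(c^2)$; the ``more careful second-order accounting'' you gesture at is exactly the missing idea.

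What the paper does instead is a single pointwise inequality (Steel and Matsen's Lemma~3.2, restated as a ``direct computation'' in the proof of Lemma~\ref{lemm:A/B}): for all $u\in[0,1)$ and $m\ge3$,
\[
\phi(u)^m=\Bigl(\frac{1+2u}{1-u}\Bigr)^m \ \ge\ m^2\bigl(1-\zeta(u)\bigr).
\]
Since $\Delta_1\ge 2c$ and $\Delta_j\le0$ on $F_c^{(n)}$ and $\phi(U)\ge1$, one has $\Pi_1\ge P_0^{n_0}(P_1P_2^2)^s\phi(U)^{2c\sqrt n}$ and $\Pi_j\le P_0^{n_0}(P_1P_2^2)^s$; applying the inequality with $m=2c\sqrt n$ produces, inside the numerator expectation, the factor $4c^2n\,(1-V)$, whence
\[
\frac{\E(\Pi_1(\nn)\,|\,z)}{\E(\Pi_j(\nn)\,|\,z)}\ \ge\ 4c^2 n\,\frac{\E(V^s(1-V)\,|\,z)}{\E(V^s\,|\,z)}\ =\ 4c^2 n\,R_s.
\]
Now Proposition~\ref{prop:main} gives $R_s\ge\alpha/(2s)$ for $s\ge\theta$, and $s\le n/3$, so the right-hand side is $\ge 3c^2\alpha$. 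The $c^2$ thus comes from squaring the exponent in a single closed-form inequality, not from accumulating $\Theta(c\sqrt n)$ small gains; without Lemma~3.2 (or an equivalent pointwise bound turning $\phi(U)^{\Theta(c\sqrt n)}$ into $\Theta(c^2 n)(1-V)$), your argument cannot be completed.
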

We prove Propositions~\ref{prop:claim1} and~\ref{prop:claim2} in
Section~\ref{sect:claim}. 
  
From these two results, for $j=2$ and $j=3$,
\[
  \E(\Pi_1(\nn)) \geq c^2 \alpha \Prob( 4P_0-1 \in I_t
  )\,\E(\Pi_j(\nn)). 
\]
Assume that $c$ is so large that $c^2\alpha \Prob( 4P_0-1 \in I_t
)\ge\eta$. Then,  
 for every $\nn$ in $F_c^{(n)}$ and for $j=2$ and $j=3$,
\[
  \E(\Pi_1(\nn)) 
\geq 
\eta\,\E(\Pi_j(\nn)).
\]
This implies that
$\Prob \left( A^1_\eta \big| \, \NN \in F_c^{(n)} \right) = 1,$
which yields the theorem.


\section{Proofs of Propositions~\ref{prop:claim1}
  and~\ref{prop:claim2}} \label{sect:claim}
 
\subsection{Proof of Proposition \ref{prop:claim1}} \label{sect:claim1}

The proof is decomposed into two intermediate results, stated as
lemmata below and using estimates on auxiliary random variables
introduced below. 

\begin{nota}
For every $n\ge1$ and $t>0$, let $\Gamma_t(n) = [0,1/n] \times [t,t+1/n]$.
\\
For every $t>0$, let $\mu_t =
q_0^{q_0}q_1^{q_1}q_2^{q_2}q_3^{q_3}=q_0^{q_0}q_1^{3q_1}$ 
and $U_t$ denote the random variable
$$
U_t=\prod_{i=0}^3(P_i/q_i)^{q_i}.
$$
For every $\nn$ and for $ j=2$ and $j=3$, let $W_j(\nn)$
denote the random variable 
$$
W_j(\nn)=P_0^{\Delta_0(\nn)} P_1^{(\Delta_j - \Delta_0 /3
  )(\nn) } P_2^{ (\Delta_1 + \Delta_k - 2\Delta_0 /3)(\nn)
}, \quad \mbox{with} \quad \{j,k\}=\{2,3\}. 
$$
\end{nota}
One sees that
$$
U_t=P_0^{q_0}P_1^{q_1}P_2^{2q_1}/\mu_t,\qquad
Q_n(t)=\Prob
\left( \gtt \in \Gamma_t(n) \right),
$$
and, for $j=2$ and $j=3$,
$$
W_j
=(P_0/P_2)^{\Delta_0}(P_1/P_2)^{\Delta_j - \Delta_0 /3}.
$$

\begin{lemm}\label{lemm:estim}
(1) For every $\nn$ in $F_c^{(n)}$ and for $ j=2$ and $j=3$,
$W_j(\nn)\le1$. 
\\
(2) For every $\nn$ in $F_c^{(n)}$ and for $ j=2$ and $j=3$,
$W_j(\nn)\ge (q_1)^c$ on the event $\{\gtt\in\Gamma_t(n)\}$. 
\\
(3)
There exists a finite constant $\kappa$ such that
$U_t^n\ge\ee^{-\kappa}$ uniformly on the integer $n\ge1$ and on the event
$\{\gtt\in\Gamma_t(n)\}$. 
\end{lemm}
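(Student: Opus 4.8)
The plan is to derive all three claims from the explicit expressions for $p_0,p_1,p_2=p_3$ in Section~\ref{sect:bayfra}, used together with the numerical constraints defining $F_c^{(n)}$ and the fact that $\Gamma_t(n)$ shrinks to the point $(\tte,\tti)=(t,0)$. I would first record two pointwise facts: $4(p_0-p_1)=4\ee^{-4(\tti+\tte)}\ge0$ and $4(p_1-p_2)=2\ee^{-4\tte}(1-\ee^{-4\tti})\ge0$, so that $P_0\ge P_1\ge P_2=P_3>0$ identically; and that, on $\{\gtt\in\Gamma_t(n)\}$, i.e.\ for $\gti\le1/n$ and $t\le\gte\le t+1/n$, monotonicity of those formulas in $(\tte,\tti)$ together with $\gte\ge t>0$ keeps every $P_i$ inside the fixed compact interval $[q_1,q_0]\subset(0,1)$, the extreme values being attained as $(\tte,\tti)\to(t,0)$.

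For part~(1), write $W_j=(P_0/P_2)^{\Delta_0}(P_1/P_2)^{\Delta_j-\Delta_0/3}$. On $F_c^{(n)}$ we have $\Delta_0\le0$ and, for $j\in\{2,3\}$, $\Delta_j-\Delta_0/3\le -c+c/3<0$ (because $\Delta_j\le-c$ and $|\Delta_0|\le c$); since both bases are $\ge1$ by the ordering above, and a real $\ge1$ raised to a non-positive power is $\le1$, both factors are $\le1$, hence $W_j(\nn)\le1$.

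For part~(2), I would use the same factorisation in the form $W_j=(P_2/P_0)^{-\Delta_0}(P_2/P_1)^{-(\Delta_j-\Delta_0/3)}$, in which both exponents are non-negative by the sign analysis of part~(1) and both bases lie in $(0,1]$. On $\{\gtt\in\Gamma_t(n)\}$, monotonicity gives $P_2/P_0\ge q_1/q_0$, while $P_1/P_2=1+2\ee^{-4\tte}(1-\ee^{-4\tti})/(1-\ee^{-4\tte})\le1+c_t/n$, where $c_t=8\ee^{-4t}/(1-\ee^{-4t})$ and I use $1-\ee^{-4\tti}\le4\tti\le4/n$. Since $|\Delta_0|\le c$ and $|\Delta_j-\Delta_0/3|\le2c$, this yields $W_j\ge(q_1/q_0)^{c}(1+c_t/n)^{-2c}$, which is $\ge q_1^{c}$ precisely when $(1+c_t/n)^2\le q_0^{-1}$, i.e.\ once $n$ exceeds a threshold depending on $t$ only (for smaller $n$, either $F_c^{(n)}$ is empty or this threshold is absorbed into the ``$n$ large enough'' hypotheses used later in Proposition~\ref{prop:claim1}). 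The only mildly delicate point is this quantitative control of the factor $P_1/P_2$, which is not identically $1$ on $\Gamma_t(n)$.

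For part~(3), put $g(\tte,\tti)=\log U_t=\sum_{i=0}^3 q_i\log(P_i/q_i)$. On the compact box $[t,t+1]\times[0,1]$ each $P_i$ stays $\ge q_1>0$, so $g$ is real-analytic there; moreover $g(t,0)=0$ since $p_i(0,t)=q_i$, and the gradient of $g$ vanishes at $(t,0)$, because $\partial_{\tte}g=\sum_i q_i(\partial_{\tte}P_i)/P_i$ equals, at $(t,0)$, $\sum_i\partial_{\tte}P_i=\partial_{\tte}\bigl(\sum_iP_i\bigr)=\partial_{\tte}(1)=0$, and similarly $\partial_{\tti}g=0$ at $(t,0)$ (equivalently, $-g$ is the Kullback--Leibler divergence between the probability vectors $(q_i)$ and $(P_i)$, nonnegative and minimal at $P=q$). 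By Taylor's theorem, with the Hessian of $g$ bounded by some $2C$ on that box, $|g(\tte,\tti)|\le C((\tte-t)^2+\tti^2)$; hence on $\Gamma_t(n)$ one has $|n\log U_t|=|ng|\le2C/n\le2C$ for all $n\ge1$, so $U_t^n\ge\ee^{-\kappa}$ with $\kappa=2C$. I expect the main obstacle to be the uniform estimate in part~(2); parts~(1) and~(3) become routine once one notes the ordering $P_0\ge P_1\ge P_2$ and that $\nabla\log U_t$ vanishes at $(t,0)$.
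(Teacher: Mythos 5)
Your argument is correct, and parts (1) and (2) mirror the paper's own approach: factor $W_j$ as a product of ratios of the $P_i$, note $P_0\ge P_1\ge P_2$, and use the sign constraints on $\Delta_0$, $\Delta_j-\Delta_0/3$ imposed by $F_c^{(n)}$. Two remarks on (2): the paper's proof bounds the single factor $P_2^{-\Delta_j-2\Delta_0/3}$ using only $P_2\ge q_1$ and the other two factors $\ge1$, which in fact produces $W_j\ge q_1^{8c/3}$ (the exponent $-\Delta_j-2\Delta_0/3$ lies in $[c,8c/3]$, and a \emph{lower} bound on the exponent does not give $q_1^c$); your quantitative control on $P_1/P_2$ gives the stated constant $q_1^c$ at the cost of a condition $(1+c_t/n)^2\le q_0^{-1}$, i.e.\ ``$n$ large''. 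Either version suffices for the later use in Lemma~\ref{lemm:lemm}, where only the order $q_1^{O(c\sqrt{n})}$ matters.

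Part (3) is where you depart from the paper. The paper's proof bounds $P_0/q_0$ from below directly: using $\gte\le t+1/n$ and $\gti\le1/n$ it shows $P_0\ge q_0 - 5\ee^{-4t}(1-\ee^{-4/n})/4$, so that $U_t\ge(P_0/q_0)^{q_0}\ge 1-5\ee^{-4t}/(q_0n)$ and hence $U_t^n\ge(1-5\ee^{-4t}/(q_0 n))^n$, which is bounded below by a positive constant. Your argument instead works with $g=\log U_t$, observes $g(t,0)=0$ and $\nabla g(t,0)=0$ (or, more conceptually, that $-g$ is a Kullback--Leibler divergence minimal at $P=q$), and deduces $|g|\le C\bigl((\tte-t)^2+\tti^2\bigr)\le 2C/n^2$ on $\Gamma_t(n)$ by Taylor's theorem with bounded Hessian, whence $|n\log U_t|\le 2C/n$. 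This is a clean, structural explanation for why $U_t^n$ stays bounded as the box shrinks at rate $1/n$: the KL divergence is quadratic near its minimum. The paper's route is more elementary and yields an explicit constant $\kappa$; yours is more conceptual and generalizes more easily (it would work without change if the box had a different shape). Both are valid. One cosmetic slip: you write $p_i(0,t)=q_i$, but in the paper's convention the first argument is $\tte$ and the second is $\tti$, so you mean $p_i(t,0)=q_i$.
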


\begin{proof}[Proof of Lemma~\ref{lemm:estim}]
(1) For every $\gtt$, $P_0\ge P_1\ge P_2$. On $F_c^{(n)}$,
$\Delta_0\le0$ and for $ j=2$ and $j=3$, $\Delta_j - \Delta_0
/3\le0$ hence  
$$
(P_0/P_1)^{\Delta_0}\le1,\quad
(P_0/P_2)^{\Delta_j - \Delta_0 /3}\le1.
$$
This proves the claim.

(2)
One has $P_0\le1$ everywhere and $P_1\ge q_1$ and $P_2\ge q_1$ on the
event $\{\gtt\in\Gamma_t(n)\}$. 
On $F_c^{(n)}$, $\Delta_0\le0$ and for $ j=2$ and $j=3$,
$\Delta_j - \Delta_0 /3\le0$ hence  
$W_j\ge q_2^{-\Delta_j - 2\Delta_0 /3}$. Finally, on $F_c^{(n)}$,
$\Delta_j + 2\Delta_0 /3\le-c$. This proves the claim. 

(3)
For every $\gtt$ in $\Gamma_t(n)$, one has $\gti\ge0$ and $\gte\ge t$, hence
$P_1\ge q_1$ and $P_2\ge q_2=q_1$. Likewise, $\gti\le1/n$ and $\gte\le
t+1/n$ hence 
$$
P_0\ge p_0(1/n,t+1/n)\ge
q_0-5\ee^{-4t}(1-\ee^{-4/n})/4.
$$ 
This yields that, for every $n\ge1$
and every $\gtt$ in $\Gamma_t(n)$, 
$$
U_t^n\ge(1-5\ee^{-4t}/(q_0n))^n\to\exp(-5\ee^{-4t}/q_0)>0,
$$
which implies the desired lower bound.
\end{proof}

\begin{lemm}\label{lemm:lemm} 
For every $\nn$ in $F_c^{(n)}$ and for $ j=2$ and $j=3$,
$$
\E(\Pi_j(\nn) \,|\, 4P_0-1 \in I_t ) \geq \mu_t^n Q_n(t) \exp(-O(\sqrt{n})),
$$
and
$$
\E(\Pi_j(\nn) \,|\, 4P_0-1 \notin I_t ) \leq \mu_t^n \exp(-n\ell_t^2/32).
$$
\end{lemm}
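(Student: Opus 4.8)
The plan is to rewrite $\Pi_j(\nn)$ in terms of the auxiliary random variables $U_t$ and $W_j(\nn)$ introduced above, so that the exponential-in-$n$ behaviour is carried by $U_t^n=\left(P_0^{q_0}P_1^{q_1}P_2^{2q_1}/\mu_t\right)^n$ while the subexponential fluctuations are carried by $W_j(\nn)$. Concretely, using $q_1=q_2=q_3$ and the definitions of $\Delta_0,\Delta_j$, one checks the algebraic identity
\[
\Pi_j(\nn)=P_0^{n_0}P_1^{n_j}P_2^{\,n-n_0-n_j}=\mu_t^{\,n}\,U_t^{\,n}\,W_j(\nn)^{\sqrt n},
\]
since the exponent of each $P_i$ splits as (its ``expected'' part $q_i n$) plus ($\sqrt n$ times a $\Delta$-combination), and the $\Delta$-part is precisely $W_j$. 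This reduces both bounds to estimating $\E\!\left(U_t^{\,n}W_j(\nn)^{\sqrt n}\,\middle|\,4P_0-1\in I_t\right)$ from below and $\E\!\left(U_t^{\,n}W_j(\nn)^{\sqrt n}\,\middle|\,4P_0-1\notin I_t\right)$ from above.

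For the lower bound, I would restrict the expectation to the event $\{\gtt\in\Gamma_t(n)\}$, which is contained in $\{4P_0-1\in I_t\}$ for $n$ large by the choice of $\ell_t$ and the continuity of $p_0$ (for $\gtt\in\Gamma_t(n)$, $P_0$ is within $O(1/n)$ of $q_0$). On this event, part~(3) of Lemma~\ref{lemm:estim} gives $U_t^{\,n}\ge\ee^{-\kappa}$ and part~(2) gives $W_j(\nn)^{\sqrt n}\ge q_1^{\,c\sqrt n}=\exp(-O(\sqrt n))$, while the conditioning event $\{4P_0-1\in I_t\}$ has probability bounded away from $0$ (indeed bounded below in terms of $\ell_t$), so dividing by it costs only a constant. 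Hence $\E(\Pi_j(\nn)\,|\,4P_0-1\in I_t)\ge\mu_t^{\,n}\,\Prob(\gtt\in\Gamma_t(n))\,\ee^{-\kappa}\exp(-O(\sqrt n))=\mu_t^{\,n}Q_n(t)\exp(-O(\sqrt n))$, as claimed.

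For the upper bound, on the event $\{4P_0-1\notin I_t\}$ we have $|4P_0-1-(4q_0-1)|>\ell_t$, i.e.\ $|P_0-q_0|>\ell_t/4$; combining this with the fact that $\sum_i q_i\log(P_i/q_i)\le0$ always (Gibbs' inequality, since $\sum P_i=\sum q_i=1$) and a second-order Taylor / strict-concavity estimate for the relative entropy, one gets a quantitative gap $U_t=\exp\!\left(\sum_i q_i\log(P_i/q_i)\right)\le\exp(-\ell_t^2/32)$ on that event — the constant $32$ coming from the curvature of $x\mapsto q_0\log x$ together with the relation between a deviation of $P_0$ and the induced deviation of the other coordinates. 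Since $W_j(\nn)\le1$ on $F_c^{(n)}$ by part~(1) of Lemma~\ref{lemm:estim}, we obtain $U_t^{\,n}W_j(\nn)^{\sqrt n}\le\exp(-n\ell_t^2/32)$ pointwise on $\{4P_0-1\notin I_t\}$, and taking the conditional expectation gives $\E(\Pi_j(\nn)\,|\,4P_0-1\notin I_t)\le\mu_t^{\,n}\exp(-n\ell_t^2/32)$. The main obstacle is the clean quantitative entropy estimate in this last step: one must check that a deviation $|P_0-q_0|>\ell_t/4$ forces $\sum_i q_i\log(P_i/q_i)\le-\ell_t^2/32$ uniformly over all admissible $(P_1,P_2,P_3)$ with $P_2=P_3$, which is where the specific constant is pinned down and where a little care with the worst case (the other probabilities adjusting to minimize the entropy drop) is needed.
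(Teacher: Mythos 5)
Your proposal takes essentially the same route as the paper: the factorization $\Pi_j(\nn)=\mu_t^n U_t^n W_j(\nn)^{\sqrt n}$, restriction to $\{\gtt\in\Gamma_t(n)\}\subseteq\{4P_0-1\in I_t\}$ together with parts~(2) and~(3) of Lemma~\ref{lemm:estim} for the lower bound, and $U_t^n=\exp(-n\,\dd_{KL}(q,P))$ with part~(1) for the upper bound. Two small remarks. First, in the lower bound you worry about the size of $\Prob(4P_0-1\in I_t)$ in the denominator; all that is needed is that this probability is at most $1$ (dividing by it only improves the lower bound), so ``bounded away from $0$'' is a red herring. Second, the step you flag as the ``main obstacle'' --- getting $\dd_{KL}(q,P)\ge\ell_t^2/32$ from $|P_0-q_0|>\ell_t/4$ --- is handled cleanly in the paper by Pinsker's inequality $\dd_{KL}(q,P)\ge\tfrac12\|q-P\|_1^2$ followed by the trivial bound $\|q-P\|_1\ge|q_0-P_0|$, which pins down the constant $32$ with no case analysis; your proposed second-order Taylor/concavity route would also work but is more work than necessary.
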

\begin{proof}[Proof of Lemma~\ref{lemm:lemm}]
Since $P_0=p_0(\gtt)$, for every $\gtt$ in $\Gamma_t(n)$, when $n$ is
large, $4P_0-1$ is in the interval $I_t$. Consequently, 
\[
  \E(\Pi_j(\nn) \,|\, 4P_1-1 \in I_t ) \geq Q_n(t) \E \left(
    \Pi_j(\nn) \,|\, \gtt \in \Gamma_t(n)  \right). 
\]
On the event
$\{\gtt \in \Gamma_t(n)\}$,
$$ 
  \Pi_j(\nn) = \mu_t^nU_t^n
  W_j(\nn)^{\sqrt{n}}\ge\mu_t^n\ee^{-\kappa}(q_1)^{c\sqrt{n}}, 
$$
from parts (2) and (3) of Lemma~\ref{lemm:estim}, which proves the
first part of the lemma. 

Turning to the second part, 
let $\dd_{KL}$ denote the Kullback-Leibler distance between
discrete probability measures. 
When $ 4P_0-1$ is not in $I_t$,
\[
\dd_{KL} (q,P) \geq (1/2) \|q - P\|_{1}^2
\geq (1/2)(q_0 - P_0)^2 \geq \ell_t^2/32.
\]
Note that
$$
  \Pi_j(\nn) = \mu_t^n W_j(\nn)^{\sqrt{n}} \exp( - n
    \dd_{KL} (q,P)), 
$$
hence the estimate on $\dd_{KL} (q,P)$, and part (1) of
Lemma~\ref{lemm:estim}, imply the second part of the lemma. 
\end{proof}

Turning finally to the proof of Proposition \ref{prop:claim1}, we note
that $Q_n(t)=\ee^{o(n)}$ because the distribution of $\gtt$ is
tempered. 
Furthermore, Lemma~\ref{lemm:lemm} shows that, when $n$ is large enough,
\[
  \E( \Pi_j(\nn) \,|\, 4P_0-1 \in I_t ) \geq  \E( \Pi_j(\nn)
  \,|\, 4P_0-1 \notin I_t),
\]
and this concludes the proof of Proposition \ref{prop:claim1}.


\subsection{Proof of Proposition \ref{prop:claim2}} \label{sect:claim2}

Our proof of Proposition \ref{prop:claim2} is based on
Lemma~\ref{lemm:A/B} and Proposition~\ref{prop:Z} below. 

\begin{nota}
For every $u$  in $[0,1]$, let $\zeta(u) = (1 + 2u) (1 - u)^2 $.
Let $U$ and $V$ denote the random variables defined as
\[
  U = (P_1 - P_2)/(1 - P_0),
  \qquad 
  V=\zeta(U).
\]
\end{nota}
\begin{lemm} \label{lemm:A/B}
For every $\nn$ in $F_c^{(n)}$ and for $ j=2$ and $j=3$,
\[
  \frac{\E(\Pi_1(\nn)\,|\,P_0)}{\E(\Pi_j(\nn)\,|\,P_0)} 
  \geq 
  4c^2 n 
  \frac{\E \left( V^s (1-V) \,|\,P_0\right)}{\E \left( V^s\,|\,P_0
    \right)},\quad \mbox{where}\
s=(n-n_0)/3.
\]
\end{lemm}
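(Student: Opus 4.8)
The plan is to condition on $P_0$ throughout and reduce each expectation to a one–dimensional integral. Since $P_0+P_1+2P_2=1$ one has $1-P_0=P_1+2P_2$; I set $U=(P_1-P_2)/(1-P_0)$, so that $U\in[0,1]$ and, conditionally on $P_0$, both $P_1=(1-P_0)(1+2U)/3$ and $P_2=(1-P_0)(1-U)/3$ are functions of $U$. Writing $m=n-n_0=n_1+n_2+n_3$, this gives, for every $i$ in $\tau$,
\[
\Pi_i(\nn)=P_0^{n_0}\Bigl(\frac{1-P_0}{3}\Bigr)^{\!m}(1+2U)^{n_i}(1-U)^{m-n_i}.
\]
The factor $P_0^{n_0}\bigl((1-P_0)/3\bigr)^m$ is a measurable function of $P_0$, hence it cancels in the ratio and
\[
\frac{\E(\Pi_1(\nn)\,|\,P_0)}{\E(\Pi_j(\nn)\,|\,P_0)}=\frac{\E\bigl((1+2U)^{n_1}(1-U)^{m-n_1}\,|\,P_0\bigr)}{\E\bigl((1+2U)^{n_j}(1-U)^{m-n_j}\,|\,P_0\bigr)},\qquad j=2,3.
\]

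Next I would factor out a power of $V=\zeta(U)=(1+2U)(1-U)^2$. Since $3s=m$, one has the identity $(1+2U)^{n_i}(1-U)^{m-n_i}=V^s\,\rho^{\,n_i-s}$ with $\rho:=(1+2U)/(1-U)=P_1/P_2\ge1$ (because $P_1\ge P_2$ for every $\gtt$). Thus the ratio above equals $\E(V^s\rho^{n_1-s}\,|\,P_0)/\E(V^s\rho^{n_j-s}\,|\,P_0)$. The role of $F_c^{(n)}$ is precisely to control the residual exponents: on $F_c^{(n)}$ one has $n_1-s=\sqrt n\,\Delta_1(\nn)\ge 2c\sqrt n>0$ and, for $j=2,3$, $n_j-s=\sqrt n\,\Delta_j(\nn)\le -c\sqrt n<0$.

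It then remains to estimate the two conditional expectations pointwise in $U$. For the denominator, $\rho\ge1$ and $n_j-s<0$ give $\rho^{n_j-s}\le1$, hence $\E(V^s\rho^{n_j-s}\,|\,P_0)\le\E(V^s\,|\,P_0)$. For the numerator I would use $\rho-1=3U/(1-U)\ge 3U$, so $\rho\ge1+3U$, together with the elementary bound $(1+x)^p\ge\tfrac12 p(p-1)x^2$, valid for $x\ge0$ and real $p\ge2$ (Taylor with Lagrange remainder), applied with $x=3U$ and $p=n_1-s\ge 2c\sqrt n$; this is legitimate once $n$ is large. Combined with the identity $1-V=U^2(3-2U)\le 3U^2$, i.e. $9U^2\ge 3(1-V)$, this yields, pointwise in $U$,
\[
\rho^{\,n_1-s}\ge\tfrac32(n_1-s)(n_1-s-1)(1-V)\ge 4c^2n\,(1-V),
\]
the last step because $(n_1-s)(n_1-s-1)\ge 2c\sqrt n\,(2c\sqrt n-1)\ge\tfrac83 c^2n$ for $n$ large. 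Hence $\E(V^s\rho^{n_1-s}\,|\,P_0)\ge 4c^2n\,\E(V^s(1-V)\,|\,P_0)$, and dividing by the denominator bound gives the lemma for $j=2$; the identical argument with $n_3$ in place of $n_2$ covers $j=3$.

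The rearrangements in the first two paragraphs are routine consequences of the formulas for $p_i(\tte,\tti)$, and the expansion $\Pi_i(\nn)=\mu_t^nU_t^nW_i(\nn)^{\sqrt n}$ could equally be used. The one genuinely delicate point is the pointwise estimate of the last paragraph, where a power of $\rho=P_1/P_2$ lying just above $1$ has to be converted into the gain $c^2n(1-V)$ by playing the lower bound $\rho\ge1+3U$ against the upper bound $1-V\le 3U^2$; one must also make sure that the threshold on $n$ there depends only on $c$ and not on $\nn$, $t$ or $P_0$. It is worth recording at this stage that $V\in[0,1]$, which follows from the AM--GM inequality $27P_1P_2^2\le(P_1+2P_2)^3=(1-P_0)^3$, since $V\le1$ is what makes $V$ a legitimate input to Steel and Matsen's lemma (via Proposition~\ref{prop:main}) in the next step.
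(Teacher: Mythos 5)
Your argument is correct and follows the paper's route: express $\Pi_i$ conditionally on $P_0$ through $V=\zeta(U)$ and $\rho=P_1/P_2$, use the $F_c^{(n)}$ constraints to reduce the denominator to $\E(V^s\,|\,P_0)$ and the numerator to a large positive power of $\rho$, and convert that power into $4c^2n(1-V)$ pointwise. The only cosmetic difference is that the paper invokes Steel and Matsen's Lemma 3.2 (the inequality $\rho^m\ge m^2(1-\zeta(u))$ for $m\ge 3$) applied directly with $m=2c\sqrt n$, whereas you rederive an equivalent bound from scratch via the Taylor--Lagrange estimate $(1+x)^p\ge\tfrac12 p(p-1)x^2$ and the identity $1-V=U^2(3-2U)$, which is a perfectly acceptable substitute.
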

\begin{proof}[Proof of Lemma~\ref{lemm:A/B}]
Recall that, for every $c>1$, $F_c^{(n)}$ is
\[
  F_c^{(n)} = \{\nn \,:\,|\nn|=n,\,-2c \leq
  \Delta_2(\nn) \leq c,-2c \leq \Delta_3(\nn) \leq c,-c \leq
  \Delta_0(\nn) \leq 0\}. 
\]
Using the $\Delta$ variables, one can rewrite $\Pi_1$, $\Pi_2$ and $\Pi_3$ as
$$
\Pi_i(\nn)
= P_0^{n_0} (P_1 P_2^2)^{s} \left( P_1/P_2 \right)^{\Delta_i(\nn)
  \sqrt{n}},\quad 
i=1,2,3,\ s=(n-n_0)/3.
$$
Assume that $\nn$ is in $F_c^{(n)} $. Then, $\Delta_1(\nn)\ge2c$,  $\Delta_j(\nn)\le0$ for $j=2$ and $j=3$, and $P_1\ge P_2$. Hence 
$$
\Pi_1(\nn)
\geq P_0^{n_0} (P_1 P_2^2)^{s} \left(P_1/P_2\right)^{2c \sqrt{n}},
\qquad
\Pi_j(\nn) 
\leq P_0^{n_0} (P_1 P_2^2)^{s}.
$$
Furthermore,
$$
 P_1 P_2^2 = (1/27) V (1 - P_0)^3,
  \quad 
  P_1/P_2 = (1+2U)/(1-U),
$$
  hence for $j=2$ and $j=3$,
$$
  \frac{\E(\Pi_1(\nn)\,|\,P_0)}{\E(\Pi_j(\nn)\,|\,P_0)} \geq
  \frac{\E \left( V^{s} \left( (1+2U)/(1-U)\right)^{2c\sqrt{n}}\,
      |\,P_0\right) }{\E \left( V^{s} \,|\,P_0\right)}. 
$$
Direct computations (or Lemma 3.2 in Steel and Matsen \cite{steel:baypar}) 
show that,
 for every $u$ in $[0,1)$ and every $m \geq 3$,
\[
  \left( (1+2u)/(1-u)  \right)^m \geq m^2 ( 1 - \zeta(u)),
\]
hence
$$
\left( (1+2U)/(1-U)\right)^{2c\sqrt{n}}\ge4c^2n\,(1-V).
$$
The conclusion of Lemma \ref{lemm:A/B} follows.
\end{proof}

\begin{prop} \label{prop:Z} Assume that the
  distribution of $\gtt$ is tempered. Then there exists $\theta$ and
  $\alpha$, both positive and independent of $c$, such that for every $s
  \ge \theta$, on the event $\{4P_0-1\in I_t\}$,
\[
  4s \,\E(V^s(1-V)\,|\,P_0) \geq \alpha\,\E(V^s\,|\,P_0).
\]
\end{prop}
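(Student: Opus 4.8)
The plan is to reduce Proposition~\ref{prop:Z} to Proposition~\ref{prop:main} applied to the conditional law of $V$ given $P_0$. Recall that $V=\zeta(U)$ with $U=(P_1-P_2)/(1-P_0)$, and from the formulas for $p_i(\tte,\tti)$ one computes $1-P_0=\tfrac14\bigl(3-\ee^{-4\gte}(1+2\ee^{-4\gti})\bigr)$ and $P_1-P_2=\tfrac14\ee^{-4\gte}(1-\ee^{-4\gti})\cdot(\text{something})$; in any case $1-V=1-\zeta(U)$ is, up to the normalisation by $(1-P_0)^3$, proportional to $\ee^{-4\gte}(1-\ee^{-4\gti})$, while the conditioning event $\{4P_0-1=z\}$ is exactly $\{\ee^{-4\gte}(1+2\ee^{-4\gti})=z\}$. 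Hence, on $\{4P_0-1=z\}$, the conditional distribution function of $V$ near $1$ is governed precisely by the function $G(z,s)=\Prob(\ee^{-4\gte}(1-\ee^{-4\gti})\le s\mid \ee^{-4\gte}(1+2\ee^{-4\gti})=z)$ appearing in Definition~\ref{defi:temp}. So the first step is an explicit change of variables turning ``$V\ge v$'' into ``$\ee^{-4\gte}(1-\ee^{-4\gti})\le s(v,z)$'' for an affine-in-$(1-v)$ quantity $s(v,z)$, and thus rewriting $\Prob(V\ge v\mid P_0=\tfrac{1+z}{4})$ as $G\bigl(z,s(v,z)\bigr)$.

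The second step is to feed condition~\ref{deftemp1} of Definition~\ref{defi:temp} into this identity. The tempered hypothesis gives, for $z\in I_t$ and small $s$,
\[
\Bigl| G(z,s)-\sum_{i=0}^{k-1}F_i(z)s^{\alpha+\eps_i}\Bigr|\le\kappa\,s^{\alpha+\eps_k},
\qquad 0=\eps_0<\eps_1<\dots<\eps_{k-1}\le2<\eps_k .
\]
After the substitution $s=s(v,z)$, which is comparable to $1-v$ with a $z$-dependent but (on the compact $I_t$) bounded and bounded-away-from-zero factor, this becomes an expansion of $v\mapsto\Prob(V\ge v\mid P_0)$ in powers $(1-v)^{\alpha+\eps_i}$ with coefficients $\gamma_i=\gamma_i(z)$ that are bounded functions of $z$ (products of the $F_i(z)$ with powers of that Jacobian factor), with remainder controlled by $(1-v)^{\alpha+\eps_k}$. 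Rescaling the exponents $\eps_i$ by a factor $2$ if necessary to land in the normalisation $\eps_{n-1}\le1<\eps_n$ used in Proposition~\ref{prop:main} (or simply noting that $\eps_{k-1}\le2$ and $\eps_k>2$ means $\eps_i/2$ satisfies the required interlacing), the hypotheses of Proposition~\ref{prop:main} hold for the random variable $V$ under $\Prob(\cdot\mid P_0)$, uniformly for $z\in I_t$, with the multiparameter $\gamma(z)$ ranging over a bounded set.

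The third step invokes the conclusion of Proposition~\ref{prop:main}: there is a finite $\theta(\gamma)$, \emph{continuous} in $\gamma$, such that $2tR_t\ge\alpha$ for all $t\ge\theta(\gamma)$, where $R_t=\E(V^t(1-V)\mid P_0)/\E(V^t\mid P_0)$. Here is where the continuity remark is used: as $z$ ranges over the compact interval $I_t$, $\gamma(z)$ ranges over a compact set (bounded functions of $z$, composed with the continuous, bounded, nonvanishing Jacobian), so $\theta(\gamma(z))$ is bounded by some $\theta$ independent of $z$; likewise $\alpha$ can be taken independent of $z$. Both $\theta$ and $\alpha$ are manifestly independent of $c$, since $c$ never enters the conditional law of $V$ given $P_0$ — it only entered Lemma~\ref{lemm:A/B}. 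Taking $s\ge\theta$ and writing out $2sR_s\ge\alpha$ as $2s\,\E(V^s(1-V)\mid P_0)\ge\alpha\,\E(V^s\mid P_0)$, then multiplying by $2$, gives $4s\,\E(V^s(1-V)\mid P_0)\ge 2\alpha\,\E(V^s\mid P_0)\ge\alpha\,\E(V^s\mid P_0)$ on $\{4P_0-1\in I_t\}$, which is the claim (with $\alpha$ replaced by $2\alpha$, or simply keeping $\alpha$).

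The main obstacle I anticipate is the bookkeeping in the change of variables: one must check that $s(v,z)$ really is affine (or at least smooth, increasing, and comparable to $1-v$) in $1-v$ on the relevant range, that small $1-v$ corresponds to small $s$ so that the Taylor control in Definition~\ref{defi:temp}\eqref{deftemp1} applies, and that the resulting coefficients $\gamma_i(z)$ genuinely depend on $z$ only through bounded functions — so that $\gamma(I_t)$ is precompact and the continuity of $\theta(\gamma)$ delivers a uniform threshold. A secondary point to handle carefully is matching the two normalisations of the exponents $(\eps_i)$: Definition~\ref{defi:temp} requires $\eps_{k-1}\le2<\eps_k$ whereas Proposition~\ref{prop:main} requires $\eps_{n-1}\le1<\eps_n$, and the factor-of-two discrepancy is exactly absorbed by the fact that $1-V$ vanishes to order $1-v$ (not $(1-v)^{1/2}$) — equivalently, one should double-check that $\zeta(u)=(1+2u)(1-u)^2$ has $1-\zeta(u)\sim 3(1-u)$... plus a correction... as $u\to1$, so that powers of $1-v$ translate to powers of $1-u$ without fractional shifts. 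Once these identifications are in place, the proof is a direct citation of Proposition~\ref{prop:main} together with a compactness argument.
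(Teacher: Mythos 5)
Your overall plan is the paper's plan: change variables to put the conditional law of $V$ given $P_0$ in the form required by Proposition~\ref{prop:main}, feed in condition~\eqref{deftemp1} of Definition~\ref{defi:temp}, then use the continuity of $\theta(\gamma)$ together with compactness of $I_t$ to get a threshold independent of $z$ and $c$. The compactness/continuity paragraph is sound. But the crucial analytic step --- the change of variables --- is done wrong, and the error is not cosmetic.

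You claim that $s(v,z)$ is comparable (``affine'') to $1-v$, and in your final paragraph you say the needed check is that $1-\zeta(u)\sim3(1-u)$ as $u\to1$. Both statements are incorrect. The endpoint $v\to1^-$ of $V=\zeta(U)$ corresponds to $u\to0^+$, not $u\to1^-$ (since $\zeta(0)=1$ and $\zeta(1)=0$). At $u\to0^+$ one has $\zeta(u)=(1+2u)(1-u)^2 = 1-3u^2+2u^3$, so $1-v\sim 3u^2$; equivalently $u=\zeta^{-1}(v)=w/\sqrt3+w^2/9+5w^3/(54\sqrt3)+O(w^4)$ with $w=\sqrt{1-v}$. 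Since $2s=u(3-z)$, the argument of $G$ satisfies $s\sim\mathrm{const}\cdot\sqrt{1-v}$, not $s\sim\mathrm{const}\cdot(1-v)$. (You also write that $1-V$ is proportional, up to $(1-P_0)^3$, to $\ee^{-4\gte}(1-\ee^{-4\gti})$; because $1-V\sim3U^2$ it is in fact proportional to the \emph{square} of that quantity.)

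This is not a normalisation one can ``rescale away'': plugging $s^{\alpha+\eps_i}$ with $s\asymp\sqrt{1-v}$ into the tempered bound gives an expansion in powers $(1-v)^{\alpha/2+\eps_i/2}$, and this halving is exactly what turns the tempered requirement $\eps_{k-1}\le2<\eps_k$ into the requirement $\eps_{k-1}/2\le1<\eps_k/2$ of Proposition~\ref{prop:main}. Your version (``powers $(1-v)^{\alpha+\eps_i}$ with $\eps_{k-1}\le2<\eps_k$'') simply does not satisfy the hypotheses of Proposition~\ref{prop:main}, and no mere relabelling of indices fixes that. The halving of $\alpha$ is also where the factor $4$ in the statement of Proposition~\ref{prop:Z} comes from: Proposition~\ref{prop:main} gives $2sR_s\ge\alpha/2$, i.e.\ $4sR_s\ge\alpha$, whereas your last step ($4sR_s\ge2sR_s\ge\alpha$) only happens to land on the same inequality by an unrelated coincidence. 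So the gap is concrete: you must establish $u\sim\sqrt{(1-v)/3}$ at $u\to0^+$, carry the half-exponents through, and verify $\eps_i/2$ interlaces with $1$; without this the citation of Proposition~\ref{prop:main} is unjustified.
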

\begin{proof}[Proof of Proposition~\ref{prop:Z}]
We recall that $U$ and $V$ denote random variables defined as
\[
  U = (P_1 - P_2)/(1 - P_0),
  \quad V = \zeta(U),\quad
  \zeta(u)=(1 + 2u) (1 - u)^2.
\]
To use Proposition~\ref{prop:main}, one must compute a Taylor
expansion at $v=1^-$ or, equivalently, at $u=0^+$, of the conditional
probability 
\[
  \Prob (V \geq v \,|\, P_0) = \Prob( U \leq u \,|\, P_0),
\]
where $u=\zeta^{-1}(v)$. 
Besides, for $v$ close to $1$,
$$u=\zeta^{-1}(v) = w/\sqrt{3}+
  w^2/9 + 5w^3/54\sqrt{3} + O(w^4),\quad \mbox{with } w=\sqrt{1-v}.
$$
Since $ U = (P_1 - P_2)/(1 - P_0) $,
\[
  \Prob( U \leq u \,|\, 4P_0-1 = z) = \Prob \left( \sse(3 - \ssi) \leq
    2s \,|\, \sse\ssi = z\right), 
\]
where we used the notations
\[
  \sse=\ee^{-4\gte}, \quad \ssi=1 + 2\ee^{-4\gti}, \quad  2s=u(3-z).
\]
Using Definition~\ref{defi:temp}, one has
\[
  G(z,s)= \Prob \left( \sse(3 - \ssi) \leq 2s \,|\, \sse\ssi=z \right).
\]
Since the distribution of $\gtt$ is tempered, there exists some
bounded functions $F_i$ defined on $I_t$, a positive number
$\alpha$, $n+1$ real numbers
\[
  0=\varepsilon_0 < \varepsilon_1 < \dots < \varepsilon_{n-1} \leq 2 <
  \varepsilon_{n}, 
\]
and two positive numbers $\kappa$ and $s_0$ such that for every $ 0
\leq s \leq s_0 $ and every $z$ in $I_t$,  
$$  \left| G(z,s) - \sum_{i=0}^{n-1} F_i(z)s^{\alpha+\varepsilon_i}
\right| \leq \kappa s^{\alpha + \varepsilon_n}. 
$$
Combining this with the relation $2s=u(3-z)$ and the expansion of
$u=\zeta^{-1}(v)$ along the powers of $w$, 
one sees that there exists
some bounded functions $f_i$ on $I_t$, a positive number
$\kappa'$ and $0\le v_0<1$ such that for every  $v_0\le v\le1$ and
every $ z\in I_t$,  
$$
  \left| \Prob (V \geq v \,|\, 4P_0-1 = z)  - \sum_{i=0}^{n-1}
    f_i(z)(1-v)^{\alpha/2+\varepsilon_i/2} \right| \leq \kappa'
  (1-v)^{\alpha/2 + \varepsilon_n/2}. 
$$
Since the functions $f_i$ are bounded and positive on $I_t$,
Proposition~\ref{prop:main} implies
that there exists a positive number 
$\theta$ such that for every $z$ in $I_t$ and every $ s \ge \theta$,
the conclusion of Proposition~\ref{prop:Z} holds.
\end{proof}

Assuming this, the proof of Proposition~\ref{prop:claim2} is as follows.
Let $s$, $\theta$ and $\alpha$ be as in Lemma~\ref{lemm:A/B} and
Proposition~\ref{prop:Z}. 
Since $n-n_0 = (1-q_0)n - \Delta_0 \sqrt{n} \geq (1-q_0)n$ for every
$\nn$ in $F_c^{(n)}$, one knows that
 $s=(n-n_0)/3 \ge \theta$ when $n$ is large enough. Furthermore, $s\le n/3$.
Finally, for every $\nn$ in $F_c^{(n)}$ with $n$ large enough, on
the event $\{4P_0-1\in I_t\}$ and for $ j=2$ and $j=3$, 
\[
  \E(\Pi_1(\nn)\,|\,P_0) \geq 3c^2 \alpha\,\E(\Pi_j(\nn)\,|\,P_0).
\]
This concludes the proof of Proposition \ref{prop:claim2}.


\section*{Acknowledgments}

I would like to thank Mike Steel and an anonymous referee for some
helpful comments. 


\bibliographystyle{plain}
\bibliography{references4}

\appendix

\section{Proof of Propositions~\ref{prop:tame},~\ref{prop:disc},
  and~\ref{prop:examtemp}} \label{appe:tame}

\begin{nota}
Introduce the random variables
$$
(\sse,\ssi)=\varsigma(\gte,\gti),\qquad\mbox{where}\ \varsigma(\tte,\tti)=(\ee^{-4
  \tte},1+2 \ee^{-4 \tti}), 
$$
that is,
$$
\sse=\ee^{-4 \gte}, \qquad \ssi= 1+2 \ee^{-4 \gti}.
$$
\end{nota}
Hence, $G(z,\cdot)$ is defined by
$$
G(z,s)=\Prob \left( 3\sse\le 2s+z\,|\, \sse\ssi = z \right),
$$
\subsection{Proof of Proposition~\ref{prop:tame}}
The distribution of $(\sse,\ssi)$ has a smooth joint probability
density, say $\varpi$, 
  defined on the set $0<x\le1<y\le3$ by
\[
  \varpi(x,y) = \frac{\omega \circ \varsigma^{-1} (x,y)}{16 x (y -1)}.
\]

For tame priors, the probability $Q_n(t)$ introduced in condition~\ref{deftemp2}
of Definition~\ref{defi:temp} is of order $1/n^2$, hence this condition holds.

The definition of $G(z,s)$ as a conditional expectation can be rewritten as
$$
G(z,s) =  \Prob \left( 3\sse\le 2s+\sse\ssi \,|\, \sse\ssi= z \right). 
$$
Hence, for every measurable bounded function $H$,
\[
  \E \left( H(\sse\ssi)\,;\,3\sse \le 2s+\sse\ssi \right)  =\E \left(
    H(\sse\ssi) G(\sse\ssi,s) \right), 
\]
that is,
\[
  \iint H(xy) \1\{ 3x \leq 2s+xy \} \varpi(x,y) \dd x \dd y 
= \iint H(xy) G(xy,s) \varpi(x,y) \dd x \dd y.
\]
The change of variable $z=xy$ yields
$$
  \iint H(z) \1\{ 3x\leq 2s+z \} \varpi \left( x,z/x \right) \dd z \dd x  / x
=
\iint H(z) G(z,s) \varpi \left(x,z/x \right) \dd z \dd x  / x.
$$
This must hold for every measurable bounded function $H$, hence one can choose
\[
  G(z,s) = H(z,s) /  H(z,\infty),
\]
with
\[
H(z,s) 
  = 
  \int \1\{ 3x \leq 2s + z\} \varpi \left( x,z/x \right) \dd x /x.
\]
Since $0\leq \sse \leq 1 \leq \ssi \leq 3$ almost surely, the integral
defining $H(z,s)$ may 
be further restricted to the range $0\le x\leq 1$ and $z/3 \leq
x \leq z $.
Finally, for every $s \ge0$ and $z\in [0,3]$,
\[
  G(z,s) = H(z,s) /  H(z,1),
\]
where
\[
  H(z,s) = \int_{m(0,z)}^{m(s,z)} \varpi \left( x, z/x \right) \dd x /x,
  \quad \mbox{with } m(s,z) = \min\{1,z, (2s+z)/3\}.
\]
Hence,
$m(0,z)=z/3$ and, for small positive values of $s$, $m(s,z)=m(0,z)+2s/3$.
When $0\le z\le1$, $m(s,z)\to m(\infty,z)=z$ when $s\to\infty$ and
this limit is reached for $s=z$. 
When $1\le z\le3$, $m(s,z)\to m(\infty,z)=1$ when $s\to\infty$ and
this limit is reached for $s=(3-z)/2$. 
In both cases, $m(\infty,z)=m(1,z)$ hence $H(z,\infty)=H(z,1)$.

Because $\omega$ and $\varsigma^{-1}$ are smooth,  the Taylor-Lagrange
formula shows that,  
for every $s\ge0$ and every fixed $z$,
\[
  H(z,s) = 
  H(z,0) + H'(z,0) s + H''(z,0) \frac{s^2}2 + H^{(3)}(z,0) \frac{s^3}6 +
  \int_0^s (x-s)^3 H^{(4)}(z,s) \frac{\dd x}{24},
\] 
where all the derivatives are partial derivatives 
with respect to the second argument $s$.

Simple computations yield
$H(z,0) = 0$ and the values of the three derivatives $H'(z,0)$, $H''(z,0)$ and
$H^{(3)}(z,0)$ as combinations of $\omega$ and of partial derivatives
of $\omega$, evaluated at the point $(\vartheta,0)$, where
$3\ee^{-4\vartheta}=z$.

Furthermore, the hypothesis on $\omega$ ensures that $H^{(4)}(z,\cdot)$ is
bounded, in the following sense: there exist 
positive numbers $s_0$ and $\kappa_0$ such that for every $s$ in $[0,s_0]$ and every 
$z$ in $I_t$,
\[
H^{(4)}(z,s)\le 24\kappa_0.
\]
Hence, $\gtt=(\gte,\gti)$ fulfills the first condition to be tempered, with
\[
  k=3, \quad \alpha = 1, \quad \varepsilon_1 = 1, \quad \varepsilon_2
  = 2, \quad \varepsilon_3 = 3, \quad \kappa = \kappa_0, 
\]
and, for every $0\le i\le 2$,
\[
  F_i(z)=H^{(i+1)}(z,0)/H(z,1).
\]
Finally, since $\omega$ is smooth, the functions $F_i$ are bounded on
$I_t$.

\subsection{Proof of Proposition~\ref{prop:disc}}
Recall that, using the random variables $\sse=\ee^{-4\gte}$ and $\ssi
= 1+ 2\ee^{-4\gti}$, the function $ G $ is  
characterized by the fact that, for every measurable bounded function $H$,
$$  
\E \left( H(\sse\ssi):\sse(3 - \ssi ) \leq 2s \right)  =\E \left(
  H(\sse\ssi) G(\sse\ssi,s) \right). 
$$
Here, $\sse$ and $\ssi$ are independent, the distribution of
$\sse$ is uniform on $[0,1]$ and the distribution of $\ssi$ is
discrete with
$$
\Prob( \ssi = y_n ) =  r_n/r.
$$
Thus,
\[
  \sum_nr_n
  \int_0^1
  H(x y_n ) \, \1\{x (3- y_n) \leq 2s\} \, \dd x 
= \sum_nr_n 
  \int_0^1
  H( x y_n  ) \,G( x y_n,s  )  \, \dd x.
\]
The changes of variable $ z=y_nx $ in each integral yield
\[
  \sum_n\frac{r_n}{y_n}
  \int H(z)\,\1\{z\le y_n\}\1\{3z\leq (2s+z)y_n\}\, \dd z 
  =
  \sum_n\frac{r_n}{y_n}
  \int H(z) \,\1\{z\le y_n\}\,G(z,s)\,\dd z.
\]
This must hold for every measurable bounded function $H$, hence
$$
G(z,s) = H(z,s)/H(z,\infty),\quad 
H(z,s)=\sum_n (r_n/y_n) \,\1\{z\le y_n\}\1\{3z\leq (2s+z)y_n\}.
$$
Since $r_n/y_n=n^{-b}-(n+1)^{-b}$ for $n\ge1$, $H(z,s) = n(z,s)^{-b}$
where
\[
n(z,s) 
= 
\inf \{ n \geq 1\,|\, z\le y_n,\,3z \leq (2s+z) y_n \}.
\]
Since $y_n\to3$ when $n\to\infty$, $n(z,s)$ is finite for every $z<3$
and $s>0$.  

For every $z>0$, when $s$ is large enough, namely $s\ge(3-z)/2$,
the condition that $3z \leq (2s+z) y_n$ becomes useless and
$$
n(z,s) 
= 
\inf \{ n \geq 1\,|\, z\le y_n\},
$$ 
hence $n(z,s)$ and $H(z,s)$ are independent of $s$.  If $z\ge1$, this
implies that $n(z,s)$ and $H(z,s)$ are independent of $s\ge1$.  If
$z<1$ and $s\ge1$, the conditions that $z\le y_n$ and $3z \leq (2s+z) y_n$
both hold for every $n\ge1$ hence $n(z,s)=1$ and $H(z,s)=1$. In both
cases, $H(z,\infty)=H(z,1)$.

We are interested in small positive values of $s$. For
every $z<3$, when $s$ is small enough, namely
$s\le(3-z)/2$, the condition $z\le y_n$ becomes useless and
$$
n(z,s) 
= 
\inf \{ n \geq 1\,|\, 3z \leq (2s+z) y_n\},
$$ 
When furthermore $s<z$, $n\ge n(z,s)$ is equivalent to the condition
$$
n^{-a}\le h(s/z),\quad\mbox{with}\quad
h(u)=- \frac14 \ln \left( 1 - \frac{3 u}{1 + 2u}  \right),\ 0\le u<1.
$$
Finally, for every $s<\min\{z,(3-z)/2\}$,
$n(z,s)$ is the unique integer such that
$$
n(z,s) - 1 < h(s/z)^{-1/a} \le n(z,s).
$$
This reads as
$$
  h(u)^{b/a}[1 + h(u)^{1/a}]^{-b}
  <
  H(z,1)\,G(z,s)
  \leq
  h(u)^{b/a},\quad u=s/z.
$$
One sees that the function $h$ is analytic and that $h(u)=(3u/4)+o(u)$
when $u\to0$, hence, 
$$
h(u)^{b/a}=(3u/4)^{b/a}(1+a_1u+a_2u^2+a_3u^3+o(u^3)),
$$
when $u\to0$, for given coefficients $a_1$, $a_2$ and $a_3$. 
Likewise, since $1/a>3$, $h(u)^{1/a}=o(u^3)$ when $u\to0$. This implies that
$$
\left(1 + h(u)^{1/a}\right)^{-b}=1+o(u^3),
$$ 
hence
$$
H(z,1)\,G(z,s)=(3u/4)^{b/a}(1+a_1u+a_2u^2+a_3u^3+o(u^3)).
$$
This yields the first part of Definition~\ref{defi:temp}, with
$$
k=3,\quad
\alpha=b/a,\quad
(\varepsilon_1,\varepsilon_2,\varepsilon_3)=(1,2,3),
$$
and
$$
F_0(z)=(3/4z)^{b/a}/H(z,1),\quad
F_1(z)=a_1F_0(z)/z,\quad
F_1(z)=a_2F_0(z)/z^2.
$$
The remaining step is to get rid of the dependencies over $z$ of our
upper bounds. For 
instance, the reasoning above provides as an error term a multiple of
$$
u^{\alpha+3}/H(z,1)=s^{\alpha+3}/(z^{\alpha+3}H(z,1)),
$$ 
instead of a constant multiple of
$s^{\alpha+3}$. But $\inf I_t>0$, hence the $1/z^{\alpha+3}$
contribution is uniformly bounded. 

As regards $H(z,1)$, we first note
that $H(z,1)=1$ if $z\le1$. If $z\ge1$, elementary computations show
that $H(z,1)\ge c$ if and only if $n(z,1)\le c^{-1/b}$ if and only if
$\exp(-c^{a/b})\ge (z-1)/2$, which is implied by the fact that
$1-c^{a/b}\ge(z-1)/2$, which is equivalent to
the upper bound $c^{a/b}\le(3-z)/2$. Since $\sup I_t<3$, this can be achieved
uniformly over $z$ in $I_t$ and $1/H(z,1)$ is uniformly bounded as well.

Finally, we asked for an expansion valid on $s\le s_0$, for a fixed
$s_0$, and we proved an expansion valid over $s/z\le u_0$, for a fixed
$u_0$. But one can choose $s_0=u_0\inf I_t$. This concludes the proof
that the conditions in the first part of Definition~\ref{defi:temp}
hold. 

We now prove that the second part of Definition~\ref{defi:temp}
holds. Since $\gti$ and $\gte$ are independent, 
for every positive integer $n$,
\[
  Q_n(t) = \Prob \left( \gti \leq 1/n \right) \Prob \left( t \leq \gte
    \leq t + 1/n \right). 
\]
One has
\[
  n \Prob \left( t \leq \gte \leq t + 1/n \right) \to 4 \ee^{-4 t}
  \quad \mbox{when} \quad n \to + \infty, 
\]
and
\[
  \frac1{r(n^{1/a} + 1)^b} \leq \Prob ( \gti \leq 1/n ) \leq \frac3{rn^{b/a}}. 
\]
Since $Q_n(t)$ is bounded from below by a multiple of $1/n^{1+b/a}$, 
the second point of Definition~\ref{defi:temp} holds.

\subsection{Proof of Proposition~\ref{prop:examtemp}}
Recall once again that, using the random variables $\sse=\ee^{-4\gte}$ and $\ssi
= 1+ 2\ee^{-4\gti}$, the function $ G $ is  
characterized by the fact that, for every measurable bounded function $H$,
$$  
\E \left( H(\sse\ssi):\sse(3 - \ssi ) \leq 2s \right)  =\E \left(
  H(\sse\ssi) G(\sse\ssi,s) \right). 
$$
\textbf{Case} $\mathbf{(i)}$. Here, $\sse$ and $\ssi$ are independent,
the distribution of 
$\sse$ is uniform on $[0,1]$ and $\ssi$ is a
continuous random variable with density
\[
  \frac1{4 \theta (s_i-1)} \1\{1+2\ee^{-4 \theta} \leq s_i \leq 3\} 
\]
with respect to the Lebesgue measure. Let $\varpi$ denote the joint
probability density defined as
\[
  \varpi(x,y)=
  \1\{0 \leq x \leq 1\}
  \1\{1+2\ee^{-4 \theta} \leq y \leq 3\} 
  \frac{1}{4 \theta (y-1)}.
\]
Thus, 
\[
  \iint H(xy) \1\{ 3x \leq 2s+xy \}  \varpi(x,y) \dd x  \dd y   
= \iint H(xy) G(xy,s) \varpi(x,y) \dd x  \dd y.
\]
The change of variable $z=xy$ yields
\[
  \iint H(z) \1\{ 3x\leq 2s+z \} \varpi \left( x,z/x \right) \dd z \dd x  / x
=
\iint H(z) G(z,s) \varpi \left(x,z/x \right) \dd z \dd x  / x.
\]
This must hold for every measurable bounded function $H$, one can choose
\[
  G(z,s) = H(z,s)/H(z,\infty),
\]
with
\[
  H(z,s) = \int \ \1\{3x \leq 2s + z\} \1\{0 \leq x \leq 1\} \1 \{ 1 +
  2 \ee^{- 4 \theta} \leq z/x \leq 3\} \dd x /(z-x).
\]
Finally, for every $s \geq 0$ and $z $ in $[0,3]$,
\[
  G(z,s) = H(z,s)/H(z,1+\ee^{-4\theta}),
\]
where
\[
  H(z,s) = \int_{m(0,z)}^{m(s,z)} \frac{\dd x}{z-x},
  \quad \mbox{with } m(s,z) = \min\{1,z/(1 + 2 \ee^{-4 \theta}), (2s+z)/3\}.
\]
Hence, $m(0,z)=z/3$ and, for small positive values of $s$,
$m(s,z)=m(0,z)+2s/3$.  When $0 \le z \le 1 + 2 \ee^{- 4 \theta}$,
$m(s,z)\to m(\infty,z)=z/(1 + 2 \ee^{-4 \theta})$ when $s\to\infty$ and
this limit is reached for $s=\frac{1 + \ee^{-4 \theta}}{1 + 2 \ee^{-4
\theta}}z$.  When $1+ 2 \ee^{-4 \theta} \le z \le 3 $, $m(s,z)\to
m(\infty,z)=1$ when $s\to\infty$ and this limit is reached for
$s=(3-z)/2$.  In both cases, $m(\infty,z)=m(1+\ee^{-4\theta},z)$ hence
$H(z,\infty)=H(z,1+\ee^{-4\theta})$.

For every fixed $0 \leq z \leq 1 + 2\ee^{-4 \theta}$ and every
$\displaystyle 0 \leq 
s \leq \frac{1 + \ee^{-4 \theta}}{1 + 2 \ee^{-4
\theta}}z$,
\[
  H(z,s) = \log \left( \frac{z}{z-s} \right).
\]
For every fixed $ 1 + 2\ee^{-4 \theta} \leq z \leq 3 $ and every
$\displaystyle 0 \leq 
s \leq \frac{3 - z}{2}$,
\[
  H(z,s) = \log \left( \frac{z}{z-s} \right).
\]
Hence, there exists a positive $s_0$ such that for every $z $ in $I_t$
and every  $s $ in $[0,s_0]$, 
\[
  H(z,s) = \log \left( \frac{z}{z-s} \right) = \log \left(1 -
    \frac{s}{z} \right).
\]
Such a function has a Taylor expansion around $s=0$ with uniformly
bounded coefficient over $z $ in $I_t$. 
Hence, $\gtt=(\gte,\gti)$ fulfills the first condition to be tempered.

We now prove that the second part of Definition~\ref{defi:temp}
holds. Since $\gti$ and $\gte$ are independent, 
for every positive integer $n$,
\[
  Q_n(t) = \Prob \left( \gti \leq 1/n \right) \Prob \left( t \leq \gte
    \leq t + 1/n \right). 
\]
One has
\[
  n \Prob \left( t \leq \gte \leq t + 1/n \right) \to 4 \ee^{-4 t}
  \quad \mbox{when} \quad n \to + \infty, 
\]
and
\[
   \Prob ( \gti \leq 1/n ) =  \frac1{\theta n}, \quad \mbox{when} \quad
   n  \mbox{ is large enough}.
\]
Since $Q_n(t)$ is bounded from below by a multiple of $1/n^2$, 
the second point of definition~\ref{defi:temp} holds.

\textbf{Case} $\mathbf{(ii)}$. Here, $\sse$ and $\ssi$ are
independent, the distribution of 
$\sse$ is uniform on $[0,1]$ and $\ssi$ is a
continuous random variable with density
\[
  \frac{\theta}{4 (s_i-1)} \left[ -\frac14 \log \left( \frac{s_i-1}{2}
    \right)  \right]^{\theta - 1}\1\{1+2\ee^{-4} \leq s_i < 3\}  
\]
with respect to the Lebesgue measure. 
One can choose
\[
  G(z,s) = H(z,s)/H(z,\infty),
\]
where
\[
  H(z,s) = \int_{m(0,z)}^{m(s,z)} \   \left[
    \frac{-1}{4} \log \left( 
      \frac{z-x}{2x} \right) \right]^{\theta-1} \frac{\dd x}{z-x},
\]
with
\[
  m(s,z) = \min\{1, z/(1+ 2 \ee^{-4}), (2s+z)/3\}.
\]
Hence, $m(0,z)=z/3$ and, for small positive values of $s$,
$m(s,z)=m(0,z)+2s/3$.  When $0 \le z \le 1 + 2 \ee^{- 4}$,
$m(s,z)\to m(\infty,z)=z/(1 + 2 \ee^{-4 })$ when $s\to\infty$ and
this limit is reached for $s=\frac{1 + \ee^{-4 }}{1 + 2 \ee^{-4
}}z$.  When $1+ 2 \ee^{-4 } \le z \le 3 $, $m(s,z)\to
m(\infty,z)=1$ when $s\to\infty$ and this limit is reached for
$s=(3-z)/2$.  In both cases, $m(\infty,z)=m(1+\ee^{-4},z)$ hence
$H(z,\infty)=H(z,1+\ee^{-4})$.

Hence, there exists a positive $s_0$ such that for every $z $ in $I_t$
and every  $s $ in $[0,s_0]$, 
\[
  H(z,s) = \int_{0}^s \frac{1}{(z-x)} \left[ \frac{-1}{4} \log \left(
      1 - \frac{3x}{2x+z} \right) \right]^{\theta-1} \dd x.
\]
Such a function has a Taylor expansion around $s=0$ with uniformly
bounded coefficient over $z $ in $I_t$. For instance, when $\theta=1/2$, 
\[
  H(z,s) = \frac{4}{\sqrt{3z}}\sqrt{s} + \frac{5}{(3z)^{3/2}} s^{3/2}
  + \frac{9 \sqrt{3} }{40 z^{5/2} } s^{5/2} + O(s^{7/2}), 
\]
where $O(s^{7/2})$ is uniformly bounded over  $z $ in $I_t$. Hence,
$\gtt=(\gte,\gti)$ fulfills the first condition to be tempered. 

We now prove that the second part of Definition~\ref{defi:temp}
holds. Since $\gti$ and $\gte$ are independent, 
for every positive integer $n$,
\[
  Q_n(t) = \Prob \left( \gti \leq 1/n \right) \Prob \left( t \leq \gte
    \leq t + 1/n \right). 
\]
One has
\[
  n \Prob \left( t \leq \gte \leq t + 1/n \right) \to 4 \ee^{-4 t}
  \quad \mbox{when} \quad n \to + \infty, 
\]
and
\[
   \Prob ( \gti \leq 1/n ) =  \frac1{n^{\theta}}, \quad \mbox{when} \quad
   n  \mbox{ is large enough}.
\]
Since $Q_n(t)$ is bounded from below by a multiple of $1/n^{1+\theta}$, 
the second point of definition~\ref{defi:temp} holds.

\textbf{Case} $\mathbf{(iii)}$. Here, $\sse$ and $\ssi$ are
independent, the distribution of 
$\sse$ is uniform on $[0,1]$ and $\ssi$ is a
continuous random variable with density
\[
  -\frac{1}{4 (s_i-1)} \log \left[ -\frac14 \log \left( \frac{s_i-1}{2}
    \right)  \right]\1\{1+2\ee^{-4} \leq s_i < 3\}  
\]
with respect to the Lebesgue measure.

One can choose
\[
  G(z,s) = H(z,s)/H(z,\infty),
\]
where
\[
  H(z,s) = \int_{m(0,z)}^{m(s,z)} \   \log \left[
    \frac{-1}{4} \log \left( 
      \frac{z-x}{2x} \right) \right] \frac{\dd x}{z-x},
\]
with
\[
  m(s,z) = \min\{1, z/(1+ 2 \ee^{-4}), (2s+z)/3\}.
\]

Hence, there exists a positive $s_0$ such that for every $z $ in $I_t$
and every  $s $ in $[0,s_0]$, 
\[
  H(z,s) = -\int_{0}^s \frac{1}{(z-x)} \log \left[ \frac{-1}{4} \log \left(
      1 - \frac{3x}{2x+z} \right) \right] \dd x.
\]
The Taylor expansion around zero of $H(z,s)$ reads as
\[
z\,H(z,s)=\left(1 - \log(3/(4z))\right) s - s\,\log(s)+o(s\,\log(s)),
\]
hence $\gtt=(\gte,\gti)$ does not fulfill the first condition to be tempered.

\textbf{Case} $\mathbf{(iv)}$. Here, $\sse$ and $\ssi$ are
independent, the distribution of 
$\sse$ is uniform on $[0,1]$ and $\ssi$ is a
continuous random variable with density
\[
  \frac{1}{16 (s_i-1)} \log \left( \frac{s_i-1}{2}
    \right)  \log \left[ -\frac14 \log \left( \frac{s_i-1}{2}
    \right)  \right]\1\{1+2\ee^{-4} \leq s_i < 3\}  
\]
with respect to the Lebesgue measure.

One can choose
\[
  G(z,s) = H(z,s)/H(z,\infty),
\]
where
\[
  H(z,s) = \int_{m(0,z)}^{m(s,z)} \ \log \left( 
      \frac{z-x}{2x} \right) \  \log \left[
    \frac{-1}{4} \log \left( 
      \frac{z-x}{2x} \right) \right] \frac{\dd x}{z-x},
\]
with
\[
  m(s,z) = \min\{1, z/(1+ 2 \ee^{-4}), (2s+z)/3\}.
\]

Hence, there exists a positive $s_0$ such that for every $z $ in $I_t$
and every  $s $ in $[0,s_0]$, 
\[
  H(z,s) = -\int_{0}^s \frac{1}{(z-x)} \log \left(
      1 - \frac{3x}{2x+z} \right) \log \left[ \frac{-1}{4} \log \left(
      1 - \frac{3x}{2x+z} \right) \right] \dd x.
\]
The Taylor expansion around zero of $H(z,s)$ reads as
\[
2z^2\,H(z,s)=\left(3/2 -3 \log(3) + 3\log(z)  + 6 \log(2)\right) s^2
- 3s^2\,\log(s)+o(s^2\,\log(s)),
\]
hence $\gtt=(\gte,\gti)$ does not fulfill the first condition to be tempered.


\section{Proof of Proposition~\ref{prop:main}} \label{appe:main}
\begin{nota}
Recall that $\Gamma$ denotes the Gamma function defined for every positive
number $x$ by
\[
  \Gamma(x) = \int_0^{+ \infty} t^{x-1}\ee^{-t} \dd t.
\]
For every real number $t$, let $[t]$ denote the integer part of $t$,
that is, the largest integer not greater than $t$, and let $\{t\}$
denote the fractional part of $t$, hence $t=\{t\}+[t]$, $[t]$ is an integer and $\{t\}$ belongs to the interval $[0,1)$.
\end{nota} 

For fixed values of the coefficients $\alpha$, $\gamma_i$ and
$\varepsilon_i$, introduce, for every $t>0$,  
$$
  M^\pm_t = \int_0^1 t v^{t-1} F_{\pm}(v) \,\dd v, 
  \quad \mbox{where } 
  F_{\pm}(v) = \sum_{i=0}^{n-1} \gamma_i ( 1 - v
  )^{\alpha+\varepsilon_i} \pm \gamma_n (1-v)^{\alpha + \varepsilon_n
  }. 
$$
Hence, 
$$
  M_t = \int_0^1 t v^{t-1} \Prob(V \geq v)\, \dd v = M^\pm_t+ \int_0^1
  t v^{t-1} [ \Prob(V\geq v) - F_\pm(v) ] \,\dd v, 
$$
and
$$
M^\pm_t =t B(t,\alpha +1) \left( \sum_{i=1}^{n-1} \gamma_i
  \Lambda(\varepsilon_i,t) P(\varepsilon_i,t) \pm \gamma_n
  \Lambda(\varepsilon_n,t) P(\varepsilon_n,t) \right)   , 
$$
where
$$
  \Lambda(\varepsilon,t)= \frac{\Gamma(\{t\} + \alpha + 1  )}{
    \Gamma(\{t\} + \alpha + \varepsilon + 1  ) }, 
  \quad
  P(\varepsilon,t) = \prod_{\ell=1}^{[t]+1} \left( 1 -
    \frac{\varepsilon}{\alpha+\varepsilon+\{t\}+\ell} \right), 
$$
and $B$ denotes the beta function
$$
  B(x,y) = \frac{\Gamma(x)\Gamma(y)}{\Gamma(x+y)}.
$$
From the control of the distribution of $V$,
$$
  M^-_t - \gamma v_0^t \leq M_t \leq M^+_t + v_0^t
\quad \mbox{where } \gamma = \sum_{i=0}^{n} |\gamma_i|.
$$
Combining this with the general expression of $M^\pm_t$ given above, one gets
$$ 
\frac{M_{t+1}}{M_t} 
  \leq   \frac{
   (t+1)B(t+1,\alpha+1)\chi_+(t+1) + v_0^{t+1}
  }
  {
  t B(t,\alpha + 1) \chi_-(t)  - \gamma v_0^{t}
  },
$$
where
$$
  \chi_\pm(t)=\sum_{i=0}^{n-1} \gamma_i \Lambda(\varepsilon_i,t)
  P(\varepsilon_i,t) \pm \gamma_n \Lambda(\varepsilon_n,t)
  P(\varepsilon_n,t). 
$$
Using the fact that
\[ 
  \frac{(t+1) B(t+1,\alpha +1)}{t B(t,\alpha +1)} = \frac{t+1}{t+\alpha+1},
\]
and that
\[
  t B(t,\alpha +1) Q_\alpha(t)\ge1, \quad \mbox{where } Q_\alpha(t) =
  \frac{(t + \alpha)(t+\alpha-1) \dots (t+\{\alpha\}) }{ \Gamma(\alpha
    + 1 )}, 
\]
one sees that
$$
 \frac{M_{t+1}}{M_t} 
  \leq 
  \frac{t+1}{t+\alpha+1}
  \frac{
  \gamma_0 + \chi_+(t+1)
  +
  Q_\alpha(t+1)v_0^{t+1}
  }
  {
  \gamma_0 + \chi_-(t)
  - 
  \gamma Q_\alpha(t)v_0^t
  }.
$$
Furthermore,
\[
  \frac{
  \gamma_0 + \chi_+(t+1)
  +
  Q_\alpha(t+1)v_0^{t+1}
  }
  {
  \gamma_0 + \chi_-(t)
  - 
  Q_\alpha(t)v_0^t
  }
  =
  1 + \frac{\chi_+(t+1) - \chi_-(t) + \kappa(t) v_0^t
  }
  {\gamma_0 + \chi_-(t)
  - 
  \gamma Q_\alpha(t)v_0^t}.
\]
where $\kappa(t) = v_0 Q_\alpha(t+1) + \gamma Q_\alpha(t)$ is a
polynomial function in $t$. 

From Lemma~\ref{lemm:chi} below, there exists a positive number $C$ which
depend on the exponents $\alpha$ and $\varepsilon_i$, $0\le i\le n$, only, such that
$$
\chi_+(t+1) - \chi_-(t)  \leq [2 \gamma_n + \varepsilon_n \gamma ] C t^{-\beta},
\quad
\chi_-(t) \geq - C \gamma t^{-\varepsilon_1}.
$$
where
$$ 
\beta = \min \{ \varepsilon_n,1+\varepsilon_1 \},\qquad
1<\beta\le2.
$$
Combining these estimates on $\chi_+(t+1)$ and $\chi_-(t)$, one sees
 that there exists finite continuous functions $\theta_1$ and
$A$ of the exponents $\gamma_i$, $\alpha$, and $\varepsilon_i$, such
that, for every $t\geq \theta_1$,
\[
 R_t\ge \alpha/t -A/t^{\beta}.
\]
Since $\beta>1$, there exists $\theta_2$ such that
$2A\,t\le\alpha\, t^{\beta}$ for every $t\ge\theta_2$.
Choosing finally $\theta=\max(\theta_1,\theta_2)$ yields Proposition~\ref{prop:main}.


\begin{lemm} \label{lemm:chi}
Let $\beta = \min \{ \varepsilon_n,1+\varepsilon_1 \}$.
There exists a positive number $C$, which depends on the exponents $\alpha$ and 
$\varepsilon_i$ only, such that
$$  \chi_+(t+1) - \chi_-(t)  \leq [2 \gamma_n + \varepsilon_n \gamma ]
C t^{-\beta}, \quad 
\chi_-(t) \geq - C \gamma t^{-\varepsilon_1}.
$$
\end{lemm}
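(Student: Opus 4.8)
The plan is to reduce the whole statement to elementary estimates on a single ratio of Gamma values. First I would observe that, writing $1-\frac{\varepsilon}{\alpha+\varepsilon+\{t\}+\ell}=\frac{\alpha+\{t\}+\ell}{\alpha+\varepsilon+\{t\}+\ell}$, the finite product defining $P(\varepsilon,t)$ telescopes into a quotient of Gamma values, and that after multiplying by $\Lambda(\varepsilon,t)$ the factors carrying the fractional part $\{t\}$ cancel, leaving, for every $t>0$ and $\varepsilon>0$,
$$
\Lambda(\varepsilon,t)\,P(\varepsilon,t)=\frac{\Gamma(\alpha+t+2)}{\Gamma(\alpha+\varepsilon+t+2)}=:h(\varepsilon,t)>0 .
$$
The functional equation of $\Gamma$ then gives the exact recursion $h(\varepsilon,t+1)=\frac{\alpha+t+2}{\alpha+\varepsilon+t+2}\,h(\varepsilon,t)$, hence $h(\varepsilon,t+1)\le h(\varepsilon,t)$ and
$$
h(\varepsilon,t+1)-h(\varepsilon,t)=-\frac{\varepsilon}{\alpha+\varepsilon+t+2}\,h(\varepsilon,t) .
$$
In particular $h(0,t)=1$, so the index $i=0$ contributes the constant $\gamma_0$ to both $\chi_+$ and $\chi_-$.

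The only analytic ingredient is a standard elementary bound for ratios of Gamma values (of Wendel/Gautschi type): there is a finite $C$, depending on $\alpha$ and $\varepsilon_0,\dots,\varepsilon_n$ only, such that $0<h(\varepsilon_i,t)\le C\,t^{-\varepsilon_i}$ for all $0\le i\le n$ and all $t\ge1$. Combined with the difference formula above and the trivial bound $\alpha+\varepsilon_i+t+2\ge t$, this gives, for $1\le i\le n-1$ and $t\ge1$,
$$
\bigl|h(\varepsilon_i,t+1)-h(\varepsilon_i,t)\bigr|\le C\,\varepsilon_i\,t^{-\varepsilon_i-1}\le C\,\varepsilon_n\,t^{-\varepsilon_1-1},
$$
where the last step uses $\varepsilon_1\le\varepsilon_i\le\varepsilon_{n-1}<\varepsilon_n$ and $t\ge1$.

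Now I would assemble the two inequalities. Since the $i=0$ terms cancel,
$$
\chi_+(t+1)-\chi_-(t)=\sum_{i=1}^{n-1}\gamma_i\bigl(h(\varepsilon_i,t+1)-h(\varepsilon_i,t)\bigr)+\gamma_n\bigl(h(\varepsilon_n,t+1)+h(\varepsilon_n,t)\bigr).
$$
With $\gamma=\sum_{i=0}^n|\gamma_i|$, the first sum is $\le\gamma\,C\,\varepsilon_n\,t^{-\varepsilon_1-1}$ by the previous display; since $h$ is positive with $h(\varepsilon_n,t+1)\le h(\varepsilon_n,t)$ and $\gamma_n\ge0$, the last term is $\le 2\gamma_n\,h(\varepsilon_n,t)\le 2\gamma_n\,C\,t^{-\varepsilon_n}$. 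As $t^{-\varepsilon_1-1}\le t^{-\beta}$ and $t^{-\varepsilon_n}\le t^{-\beta}$ for $t\ge1$ with $\beta=\min\{1+\varepsilon_1,\varepsilon_n\}$, this yields $\chi_+(t+1)-\chi_-(t)\le[2\gamma_n+\varepsilon_n\gamma]\,C\,t^{-\beta}$. For the second bound, $\chi_-(t)=\gamma_0+\sum_{i=1}^{n-1}\gamma_i h(\varepsilon_i,t)-\gamma_n h(\varepsilon_n,t)\ge\gamma_0-\sum_{i=1}^n|\gamma_i|\,C\,t^{-\varepsilon_i}\ge\gamma_0-C\gamma\,t^{-\varepsilon_1}$ for $t\ge1$ (again $\varepsilon_i\ge\varepsilon_1$); moreover $\gamma_0\ge0$, since dividing the control of $\Prob(V\ge v)$ in Proposition~\ref{prop:main} by $(1-v)^\alpha$ and letting $v\to1^-$ shows $\Prob(V\ge v)/(1-v)^\alpha\to\gamma_0$ while the left-hand side is nonnegative. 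Hence $\chi_-(t)\ge-C\gamma\,t^{-\varepsilon_1}$, and enlarging $C$ once to absorb the finitely many constants used above completes the proof.

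The only step that is not pure bookkeeping is securing the constant $C$ in the Gamma-ratio bound uniformly in $t\ge1$ and depending on $\alpha$ and the $\varepsilon_i$ only; I expect this to be the main (and rather mild) obstacle, handled in the usual way by combining the expansion $\Gamma(x)/\Gamma(x+\varepsilon)=x^{-\varepsilon}(1+O(1/x))$ for large $t$ with continuity and monotonicity of $h(\varepsilon,\cdot)$ on a bounded range of $t$. Everything else is driven by the ordering $0=\varepsilon_0<\varepsilon_1\le\dots\le\varepsilon_{n-1}\le1<\varepsilon_n$, which is exactly what makes $\beta=\min\{1+\varepsilon_1,\varepsilon_n\}$ lie in $(1,2]$, so that the error terms are summable in $t$ as needed in the proof of Proposition~\ref{prop:main}.
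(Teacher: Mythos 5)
Your proof is correct, and it takes a cleaner route than the paper's while arriving at the same estimates. The key simplification is your identity $\Lambda(\varepsilon,t)\,P(\varepsilon,t)=\Gamma(\alpha+t+2)/\Gamma(\alpha+\varepsilon+t+2)=:h(\varepsilon,t)$. The paper does not fuse these two factors: it bounds $\Lambda(\varepsilon_i,\cdot)$ crudely by $1$, and separately estimates $P(\varepsilon_i,t)$ by sandwiching it between $\ee^{-S(\varepsilon_i,t)-T(\varepsilon_i,t)}$ and $\ee^{-S(\varepsilon_i,t)}$, where $S$ and $T$ are explicit partial sums, from which it extracts constants $C_i^-\le t^{\varepsilon_i}P(\varepsilon_i,t)\le C_i^+$. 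Your route replaces this log--exp bookkeeping by a single Wendel/Gautschi bound $h(\varepsilon,t)\le C\,t^{-\varepsilon}$ on a ratio of Gamma values, and the functional equation of $\Gamma$ then produces exactly the one-step relations the paper obtains directly from the product form of $P$. From there the assembly is identical: the $i=0$ term cancels in the difference $\chi_+(t+1)-\chi_-(t)$, the $\gamma_n$ contribution is controlled by $2\gamma_n h(\varepsilon_n,t)$, and the ordering $\varepsilon_1<\dots<\varepsilon_{n-1}\le1<\varepsilon_n$ forces the exponent $\beta=\min\{\varepsilon_n,1+\varepsilon_1\}$.

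One place where your write-up is actually more careful than the paper's: for the lower bound on $\chi_-$, the paper's displayed inequality starts its sum at $i=0$, but the $i=0$ contribution $|\gamma_0|\,\Lambda(0,t)P(0,t)=|\gamma_0|$ does not decay, so as written that step does not yield $\chi_-(t)\ge-C\gamma\,t^{-\varepsilon_1}$. Your observation that $\Prob(V\ge v)/(1-v)^\alpha\to\gamma_0$ as $v\to1^-$, hence $\gamma_0\ge0$ and the constant term can simply be dropped, is exactly what is needed to close the gap, and it is consistent with the paper's later use of $\gamma_0+\chi_\pm$, which indicates the $i=0$ term was never meant to sit inside $\chi_\pm$ (a typo in the definition display). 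In short: same telescoping structure and same one-step recursion, but you package the two auxiliary functions into a single Gamma ratio, replace the $\ee^{-S}$ estimates by a standard ratio bound, and explicitly justify the positivity of $\gamma_0$ that the paper's proof silently uses.
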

\begin{proof}[Proof of Lemma~\ref{lemm:chi}]
For every real number $t\geq 1$ and every $1\le i\le n$,
\[
  \ee^{-S(\varepsilon_i,t) - T(\varepsilon_i,t)} \leq
  P(\varepsilon_i,t) \leq \ee^{-S(\varepsilon_i,t)}, 
\]
where
\[
  S(\varepsilon,t) = \sum_{\ell=1}^{[t]+1}
  \frac{\varepsilon}{\alpha+\varepsilon + \{t\} + \ell}  
  \quad \mbox{and} \quad 
  T(\varepsilon,t) = \sum_{\ell=1}^{[t]+1}
  \frac{\varepsilon^2}{(\alpha+\varepsilon + \{t\} + \ell)^2}. 
\]
Thus, there exists two positive real numbers $C_i^-$ and $C_i^+$ such
that for every real number $t \geq 1$, 
$C_i^- \leq t^{\varepsilon_i} P(\varepsilon_i,t) \leq C_i^+$,
and one can choose $C_i^+=(\alpha+\varepsilon_i+3)^{\varepsilon_i}$.

Let $C=\max\{C_i^+\,;\,1\le i\le n\}$.
Using the two relations
\[
  P(\varepsilon_i,t) - P(\varepsilon_i,t+1) = P(\varepsilon_i,t)
  \frac{\varepsilon_i}{\alpha + \varepsilon_i + t + 2}, 
\]
and
\[
  P(\varepsilon_n,t) + P(\varepsilon_n,t+1) =
  P(\varepsilon_n,t)\left(2- \frac{\varepsilon_n}{\alpha +
      \varepsilon_n + t + 2} \right), 
\]
one sees that
\[
  \chi_+(t+1) - \chi_-(t)  = 2 \gamma_n \Lambda(\varepsilon_n,t)
  P(\varepsilon_n,t) - \sum_{i=1}^{n} \gamma_i
  \Lambda(\varepsilon_i,t) P(\varepsilon_i,t)
  \frac{\varepsilon_i}{\alpha+\varepsilon_i+t+2}. 
\]
For every $1\le i\le n$, the function $\Lambda(\varepsilon_i,\cdot)$
is positive and bounded by $1$. Hence,
\begin{align*}
\chi_+(t+1) - \chi_-(t)  
  &\leq 2 \gamma_n  P(\varepsilon_n,t) + \sum_{i=1}^{n} |\gamma_i|
  P(\varepsilon_i,t) \frac{\varepsilon_i}{\alpha+\varepsilon_i+t+2}\\ 
  &\leq C\,\left(2 \gamma_n t^{-\varepsilon_n} + \gamma \varepsilon_n
    t^{-(1+\varepsilon_1)}\right), 
\end{align*}
and the first inequality in the statement of the lemma holds.
The same kind of estimates  yields
\[
  \chi_-(t) \geq -\sum_{i=0}^{n-1} |\gamma_i| \Lambda(\varepsilon_i,t)
  P(\varepsilon_i,t) - \gamma_n \Lambda(\varepsilon_n,t)
  P(\varepsilon_n,t), 
\]
hence the second inequality holds.
This concludes the proof of Lemma~\ref{lemm:chi}.
\end{proof}

\end{document}